\newtheorem{theorem}{Theorem}[section]
\newtheorem{lemma}[theorem]{Lemma}
\newtheorem{proposition}[theorem]{Proposition}
\theoremstyle{definition}
\newtheorem{remark}[theorem]{Remark}
\numberwithin{equation}{section}
\begin{document}

\title[Lipschitz conditions on bounded harmonic functions]{Lipschitz conditions on bounded harmonic functions on the upper half-space}

\author{Marijan Markovi\'{c}}

\begin{abstract}
This work is devoted to     Lipschitz conditions on bounded harmonic functions on the upper half-space in $\mathbb {R}^n$. Among other results we prove the following  one. Let $U(x',x_n)$   be
a real-valued  bounded  harmonic function  on the upper half-space $\mathbb {R}^n_+ = \{(x',x_n):x'\in \mathbb{R}^{n-1}, x_n\in (0,\infty)\}$, which is continuous on the closure of this domain.
Assume that for $\alpha\in (0,1)$ there exists a constant $C$ such that for every  $x'\in \mathbb{R}^{n-1}$  we have  $| |U|(x',x_n)  -  |U|(x',0)|\le Cx_n^\alpha,\,   x_n\in (0,\infty)$. Then
there   exists  a constant $\tilde {C}$  such  that $|U(x) - U (y)| \le  \tilde{C}  |x-y|^\alpha,\, x,y\in \mathbb{R}^{n}_+$.
\end{abstract}

\address{Faculty of Science and Mathematics\endgraf
University of Montenegro\endgraf
D\v{z}ord\v{z}a Va\v{s}ingtona BB\endgraf
81000 Podgorica
\endgraf Montenegro}

\email{marijanmmarkovic@gmail.com}

\keywords{harmonic functions, the upper half-space , Lipschitz (H\"{o}lder) classes}

\subjclass[2020]{Primary 31B05, 31B25; Secondary 26A16}

\maketitle

\section{Introduction and the main results}

In this paper $\alpha$ is always a real number in $(0,1)$. We denote by $\Lambda^\alpha(E)$ the Lipschitz class of (complex-valued) functions on a set $E\subseteq \mathbb{R}^n$. A function $f$
belongs to this  class  if there exists  a  constant $C = C_f$ such that
\begin{equation*}
|f(x)  -  f (y)| \le  C  |x-y|^{\alpha},\quad x,   y\in E.
\end{equation*}

We say that two non-negative quantities (possibly with infinite values somewhere)  $A(f)$ and  $B(f)$,  defined (for example) on a class of functions   $\mathcal {F}$,  are equivalent if there
exist positive constants  $C_1$ and $C_2$ such that  $ A(f)\le C_1  B(f)$ and   $B(f)\le C_2 A(f)$ for every  $ f\in \mathcal{F}$.

On the class of  analytic functions on the unit disk  $\mathbb{D}$ in  $\mathbb{C}$, continuous on the closed unit disk, introduce  the following  quantities:
\begin{equation*}\begin{split}
&N_1(f) = \sup _{x, y\in \overline{\mathbb{D}},\,  x\ne y} \frac {|f(x) - f(y)|}{|x-y|^\alpha};\quad
N_2(f)= \sup _{x, y\in \overline{\mathbb{D}},\,  x\ne y} \frac {||f|(x) - |f|(y)|}{|x-y|^\alpha};
\\&N_3(f) = \sup _{\zeta, \eta\in \partial\mathbb{D},\, \zeta\ne\eta} \frac {||f|(\zeta) - |f|(\eta)|}{|x-y|^\alpha}
+\sup_{z\in\mathbb{D}} \frac {\mathrm {P}[|f|](z)  - |f|(z) }{(1-|z|)^\alpha};
\\&N_4(f)= \sup _{\zeta, \eta\in \partial\mathbb{D},\, \zeta\ne \eta} \frac {||f|(\zeta) - |f|(\eta)|}{|\zeta-\eta|^\alpha}
+\sup_{r\in(0,1),\, \zeta\in\partial\mathbb{D}}\frac {||f|(\zeta)- |f|(r\zeta) |}{(1-r)^\alpha}.
\end{split}\end{equation*}
Here $\mathrm {P} [\varphi]$ denotes the Poisson  integral of a continuous  function $\varphi$ on the unit circle.

In 1997, Dykonov \cite{DYAKONOV.ACTM} proved  that $N_j$, $j=1,2,3,4$,  are mutually  equivalent. As a consequence, an analytic function $f$ on the unit disk $\mathbb{D}$,        continuous on
$\overline{ \mathbb {D}}$, belongs to the Lipschitz class  $\Lambda^\alpha (\overline {\mathbb {D}})$ if and only if  its modulus $|f|$ belongs to  $\Lambda^\alpha (\partial \mathbb {D})$  and
satisfies a Lipschitz-type condition along  every  radius of the unit disk. It also  follows an interesting result that the function $f$  belongs  to $\Lambda^\alpha(\overline{\mathbb{D}})$ if
and only if its  modulus belongs to  the  same class. These results have various  extensions.              We refer to  \cite{DYAKONOV.AIM, DYAKONOV.MZ.2005, DYAKONOV.MZ.2006} for some of them.
Pavlovi\'{c} \cite{PAVLOVIC.ACTM}   gave a  new  proof  of the Dyakonov theorem.

In  \cite{PAVLOVIC.RMI} Pavlovi\'{c} considered Lipschitz  conditions  on  a real-valued   harmonic function on the unit ball $\mathbb{B}^n$ in $\mathbb{R}^n$.        He showed that a such one
function belongs  to $\Lambda^\alpha (\mathbb {B}^n)$  if and only  if  its  modulus satisfies a Lipschitz-type  condition over every radius of the unit ball. This result is formulated  in the
following  proposition.

\begin{proposition}[Pavlovi\'{c}]
For  a real-valued harmonic function $U$ on  $\mathbb{B}^n$, continuous on the closed unit ball,  the  following  three     conditions are equivalent:
\begin{equation*}\begin{split}
  &\text{(B1) $\exists C_1$ such that } |U(x) - U(y)| \le C_1 |x-y|^\alpha,  x,y\in \mathbb{B}^n;
\\&\text{(B2) $ \exists  C_2$ such that }| |U| (\zeta) - |U| (\eta)| \le C_2  |\zeta-\eta|^\alpha, \zeta,\eta \in \partial \mathbb {B}^n ;
\\&\text{(B3) $\exists  C_3$ such that }| |U| (\zeta) - |U| (r \zeta) |  \le C_3  (1-r)^\alpha,  r\in (0,1),\zeta \in \partial \mathbb {B}^n.
\end{split}\end{equation*}
\end{proposition}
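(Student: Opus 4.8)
The plan is to establish the two substantive implications, $(\mathrm{B2})\Rightarrow(\mathrm{B1})$ and $(\mathrm{B3})\Rightarrow(\mathrm{B1})$; the implications $(\mathrm{B1})\Rightarrow(\mathrm{B2})$ and $(\mathrm{B1})\Rightarrow(\mathrm{B3})$ are immediate from the reverse triangle inequality $\bigl||U|(x)-|U|(y)\bigr|\le|U(x)-U(y)|$ together with the continuity of $U$, hence of $|U|$, on $\overline{\mathbb{B}^n}$ (using $|\zeta-r\zeta|=1-r$ for the latter). Throughout I would freely use the classical Hardy--Littlewood facts: for $U$ bounded harmonic and continuous on $\overline{\mathbb{B}^n}$ one has $U\in\Lambda^\alpha(\overline{\mathbb{B}^n})$ if and only if $(1-|x|)^{1-\alpha}|\nabla U(x)|$ is bounded on $\mathbb{B}^n$; and the Poisson integral carries $\Lambda^\alpha(\partial\mathbb{B}^n)$ into $\Lambda^\alpha(\overline{\mathbb{B}^n})$ with control of constants.

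For $(\mathrm{B2})\Rightarrow(\mathrm{B1})$ the first step I would take is to upgrade $(\mathrm{B2})$ to the assertion that the boundary trace $U_\partial:=U|_{\partial\mathbb{B}^n}$ itself lies in $\Lambda^\alpha(\partial\mathbb{B}^n)$. If $U_\partial(\zeta)$ and $U_\partial(\eta)$ have the same sign this is immediate, since then $|U_\partial(\zeta)-U_\partial(\eta)|=\bigl||U_\partial(\zeta)|-|U_\partial(\eta)|\bigr|\le C_2|\zeta-\eta|^\alpha$. If they have opposite signs, I would insert a zero of $U_\partial$: since $n\ge 2$ the sphere $\partial\mathbb{B}^n$ is connected, so along the shorter great-circle arc joining $\zeta$ to $\eta$ the intermediate value theorem produces a point $\xi$ on the arc with $U_\partial(\xi)=0$, and because chordal distance to an endpoint increases monotonically along such an arc one has $|\zeta-\xi|,|\eta-\xi|\le|\zeta-\eta|$; then
\[
|U_\partial(\zeta)-U_\partial(\eta)|=|U_\partial(\zeta)|+|U_\partial(\eta)|=\bigl||U_\partial(\zeta)|-|U_\partial(\xi)|\bigr|+\bigl||U_\partial(\eta)|-|U_\partial(\xi)|\bigr|\le 2C_2|\zeta-\eta|^\alpha .
\]
Pairs at distance bounded below are covered by the uniform bound on $|U_\partial|$, which follows from finiteness of $C_2$ unless $U_\partial$ is sign-definite --- and then $|U_\partial|=\pm U_\partial$ anyway. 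Once $U_\partial\in\Lambda^\alpha(\partial\mathbb{B}^n)$ is in hand, $(\mathrm{B1})$ follows since $U=\mathrm{P}[U_\partial]$.

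For $(\mathrm{B3})$ I would actually prove $(\mathrm{B3})\Rightarrow(\mathrm{B2})$ and then invoke the previous paragraph. The starting point is that $|U|$ is subharmonic, so $|U|\le\mathrm{P}[|U_\partial|]$ on $\mathbb{B}^n$; combined with $(\mathrm{B3})$ this gives, for every $\zeta$ and $r$, the upper bound $|U|(r\zeta)\le|U_\partial(\zeta)|+C_3(1-r)^\alpha$ and the lower bound $\mathrm{P}[|U_\partial|](r\zeta)\ge|U|(r\zeta)\ge|U_\partial(\zeta)|-C_3(1-r)^\alpha$. Near a point $\zeta_0\in\partial\mathbb{B}^n$ at which $|U_\partial(\zeta_0)|$ is not small, $(\mathrm{B3})$ together with continuity of $U$ on $\overline{\mathbb{B}^n}$ pins the sign of $U$ on a region abutting the boundary there, so that on that region $U=\pm|U|$ is a \emph{harmonic} function satisfying a radial Lipschitz-$\alpha$ bound up to a boundary cap; a local converse to Hardy--Littlewood --- a harmonic function that approaches its (continuous) radial limits at rate $(1-r)^\alpha$ along the radii meeting a boundary portion has $\Lambda^\alpha$ trace there --- then yields $|U_\partial|=\pm U_\partial\in\Lambda^\alpha$ near $\zeta_0$. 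Near a point at which $|U_\partial|$ is small, the bound $|U|(r\zeta)\le|U_\partial(\zeta)|+C_3(1-r)^\alpha$ together with $|U|\le\mathrm{P}[|U_\partial|]$ and the interior estimate $|\nabla U(x)|\lesssim(1-|x|)^{-1}\sup_{B(x,(1-|x|)/2)}|U|$ controls $|\nabla U|$ outright. Patching the two regimes, with the $\Lambda^\alpha$-constant kept uniform, produces $(\mathrm{B2})$.

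The delicate point, and what I expect to be the real work, is the intermediate regime: a point $x_0$ (write $d=1-|x_0|$, $\zeta_0=x_0/|x_0|$) at which $|U_\partial(\zeta_0)|$ --- equivalently, by $(\mathrm{B3})$, the value $|U(x_0)|$ --- is neither bounded away from $0$ nor as small as $d^\alpha$. There the sign-definite region furnished by the crude gradient bound $|\nabla U|\lesssim\|U\|_\infty/(1-|\cdot|)$ is too thin to reach depth $d$, while the subharmonic/Harnack estimate only bounds $\sup|U|$ near $x_0$ by a quantity of order $|U_\partial(\zeta_0)|$, which is too large compared with $d^\alpha$. Closing this gap seems to demand a scaled iteration --- propagating the sign and the modulus estimate through dyadically shrinking depths while keeping the accumulated errors summable --- or, equivalently, a dyadic-annulus splitting of the Poisson representation of $\nabla U(x_0)$ around $\zeta_0$ at scales $2^k d$, in which the far annuli can be estimated only after one has already extracted from $(\mathrm{B3})$ a genuine two-sided $\Lambda^\alpha$ bound for $|U_\partial|$ near $\zeta_0$. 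Extracting that bound is exactly the crux of $(\mathrm{B3})\Rightarrow(\mathrm{B2})$, and it is there that harmonicity of $U$ --- not merely subharmonicity of $|U|$ --- is indispensable.
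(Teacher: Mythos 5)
The paper does not reprove this proposition (it is cited to Pavlovi\'c's \cite{PAVLOVIC.RMI}), but it does prove the half-space analogue, Theorem~\ref{TH.MAIN.4}, and the strategy transfers verbatim to the ball, so that is the correct benchmark. Your $(\mathrm{B2})\Rightarrow(\mathrm{B1})$ argument matches it: insert a zero of the boundary trace via the intermediate value theorem along a great-circle arc to upgrade the $\Lambda^\alpha$ condition from $|U|$ to $U$ at the cost of a factor $2$, then feed the $\Lambda^\alpha$ boundary trace through the Poisson integral (the ball analogue of Theorem~\ref{TH.MAIN.1}). That part is fine.

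The gap is in your handling of $(\mathrm{B3})$. You try to prove $(\mathrm{B3})\Rightarrow(\mathrm{B2})$, which forces you to extract boundary regularity from radial information alone; you rightly flag this as the crux, and then leave it open with only a heuristic sketch of a multiscale iteration. The route the paper actually takes for $(\mathrm{H3})\Rightarrow(\mathrm{H1})$ --- and the one Pavlovi\'c takes for the ball --- bypasses $(\mathrm{B2})$ entirely: it applies the \emph{same} intermediate-value device you already used, but now along a \emph{radial segment}. If $U(\zeta)U(r\zeta)\ge 0$ then
\begin{equation*}
|U(\zeta)-U(r\zeta)|=\bigl||U|(\zeta)-|U|(r\zeta)\bigr|\le C_3(1-r)^\alpha ,
\end{equation*}
and if $U(\zeta)U(r\zeta)<0$, continuity of $t\mapsto U(t\zeta)$ on $[r,1]$ produces $\rho\in(r,1)$ with $U(\rho\zeta)=0$, so that
\begin{equation*}
|U(\zeta)-U(r\zeta)|\le \bigl||U|(\zeta)-|U|(\rho\zeta)\bigr|+\bigl||U|(\rho\zeta)-|U|(r\zeta)\bigr|\le 3C_3(1-r)^\alpha ,
\end{equation*}
estimating each term via $(\mathrm{B3})$ through the boundary point $\zeta$. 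This yields a radial Lipschitz condition on $U$ \emph{itself}, which is precisely the hypothesis of the ball analogue of Theorem~\ref{TH.MAIN.2}: a bounded harmonic function satisfying $|U(\zeta)-U(r\zeta)|\le C(1-r)^\alpha$ lies in $\Lambda^\alpha(\mathbb{B}^n)$. That theorem is where the real analytic content sits (proved in the paper for the half-space via a self-improving Taylor-expansion estimate on the weighted normal derivative, then Lemmas~\ref{LE.GRAD.ALPHA} and~\ref{LE.GM}), and it is formulated for $U$, not $|U|$ --- which is exactly why the upgrade step is needed and why your attempt to argue from subharmonicity of $|U|$ stalls. Your intuition that harmonicity, not mere subharmonicity, is indispensable is correct; it enters through this radial-to-global theorem, not through a direct boundary-trace extraction.
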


It  is  natural to ask  what can be achieved  for  harmonic function on more general   domains in $\mathbb{R}^n$. The case of bounded domains with smooth boundaries is considered by Ravisankar
\cite{RAVISANKAR.PHD, RAVISANKAR.CVEE}. In these works the  author considers the Lipschitz-type behaviour of a harmonic function  along  curves that satisfy     the  transversal condition with
respect to the boundary  of the domain.

One of our aims in this article  is to  find an analog of the  Pavlovi\'{c} theorem for harmonic functions on  the upper  half-space in $\mathbb {R}^n$. This is the content of the next theorem.

\begin{theorem}\label{TH.MAIN.4}
For a bounded  real-valued harmonic functions  $U$          on the upper  half-space $\mathbb{R}^n_+$, continuous on the closure of the domain, the following three statements  are   equivalent:
\begin{equation*}\label{H123}\begin{split}
  &\text{(H1)   $\exists  C_1$ such that } |U(x) - U(y)| \le C_1 |x-y|^\alpha,  x,y\in \mathbb {R}^n_+;
\\&\text{(H2)   $\exists C_2$ such that }  | |U| (x',0) - |U| (y',0)| \le C_2  |x'- y'|^\alpha, x', y' \in \mathbb {R}^{n-1};
\\&\text{(H3)   $\exists C_3$ such that }  | |U| (x',x_n) - |U| (x',0 ) |  \le C_3  x_n^\alpha,  x' \in  \mathbb {R}^{n-1}, x_n\in (0,\infty).
\end{split}\end{equation*}
\end{theorem}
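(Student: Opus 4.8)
The implications (H1)$\Rightarrow$(H2) and (H1)$\Rightarrow$(H3) are immediate, since $\bigl||a|-|b|\bigr|\le|a-b|$: restricting to the boundary gives (H2), and specialising $y=(x',0)$ gives (H3). For (H2)$\Rightarrow$(H1) I would proceed in two steps, each of which I expect to be routine and to be prepared as a lemma on Poisson integrals earlier in the paper. Step one is the elementary remark that a \emph{continuous} real function $f$ on a convex set whose modulus lies in $\Lambda^\alpha$ itself lies in $\Lambda^\alpha$: if $f(x)f(y)\ge0$ then $|f(x)-f(y)|=\bigl||f(x)|-|f(y)|\bigr|$, while if $f(x)f(y)<0$ one picks a zero $z$ of $f$ on the segment $[x,y]$ and bounds $|f(x)-f(y)|\le|f(x)|+|f(y)|=\bigl||f(x)|-|f(z)|\bigr|+\bigl||f(z)|-|f(y)|\bigr|$. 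Applied to $u(x'):=U(x',0)$, (H2) gives $u\in\Lambda^\alpha(\mathbb{R}^{n-1})$. Step two: a bounded harmonic function continuous up to $\partial\mathbb{R}^n_+$ equals the Poisson integral of its boundary values, $U=\mathrm{P}[u]$, and the Hardy--Littlewood fact that the Poisson extension carries $\Lambda^\alpha(\mathbb{R}^{n-1})$ into $\Lambda^\alpha(\overline{\mathbb{R}^n_+})$ (via $|\nabla\mathrm{P}[u](x',x_n)|\lesssim\int|u(z')-u(x')|\,|\nabla P(x'-z',x_n)|\,dz'\lesssim x_n^{\alpha-1}$ and the usual passage from such an interior gradient bound to a H\"older bound on the closure) then yields (H1). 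Thus (H1)$\Leftrightarrow$(H2), and also (H2)$\Rightarrow$(H3). (One cannot import Pavlovi\'c's proposition through a conformal change of variables, since the Kelvin transform identifying $\mathbb{R}^n_+$ with $\mathbb{B}^n$ does not preserve boundedness of harmonic functions.)

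It remains to prove (H3)$\Rightarrow$(H1), for which I would establish (H3)$\Rightarrow$(H2) and invoke the previous paragraph. Fix $a',b'\in\mathbb{R}^{n-1}$, put $\delta=|a'-b'|$ (only small $\delta$ matters, since $\bigl||u(a')|-|u(b')|\bigr|\le 2\|U\|_\infty$ handles $\delta\gtrsim1$), and estimate $\bigl||u(a')|-|u(b')|\bigr|$ by travelling from $(a',0)$ up to height $t$, horizontally to $(b',t)$, and down to $(b',0)$. Using (H3) on the two vertical legs gives, for every $t\ge\delta$,
\[
\bigl||u(a')|-|u(b')|\bigr|\ \le\ 2C_3\,t^\alpha+\bigl|U(a',t)-U(b',t)\bigr|,
\]
with a minor case distinction when $U$ vanishes on a vertical segment (then (H3) forces $|u|$ to be small there). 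The horizontal term is the crux: using only the a priori bound $\|U\|_\infty$ and the interior estimate $|\nabla U(\cdot,t)|\lesssim\|U\|_\infty/t$ gives $\bigl|U(a',t)-U(b',t)\bigr|\lesssim\|U\|_\infty\,\delta/t$, which at $t\asymp\delta$ does not decay in $\delta$, so (H3) must be exploited further.

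I would therefore bootstrap. A first, crude pass optimises the displayed inequality over the free height $t$ with the horizontal leg bounded by $\|U\|_\infty\,\delta/t$; balancing $t^\alpha$ against $\|U\|_\infty\,\delta/t$ gives $|u|\in\Lambda^{\beta_0}(\mathbb{R}^{n-1})$ with $\beta_0=\alpha/(\alpha+1)$. By the sign trick and the Hardy--Littlewood lemma this upgrades to $U\in\Lambda^{\beta_0}(\overline{\mathbb{R}^n_+})$. Now run the argument again: once $U\in\Lambda^\beta(\overline{\mathbb{R}^n_+})$ with $\beta<\alpha$, the interior estimate gives the sharper $\bigl|U(a',t)-U(b',t)\bigr|\lesssim C_\beta\,\delta\,t^{\beta-1}$, the displayed inequality becomes $\bigl||u(a')|-|u(b')|\bigr|\lesssim t^\alpha+C_\beta\,\delta\,t^{\beta-1}$, and optimising over $t\ge\delta$ produces $|u|\in\Lambda^{\beta'}$ with $\beta'=\alpha/(\alpha-\beta+1)$; one checks $\beta<\beta'<\alpha$. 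Iterating, $\beta_k\uparrow\alpha$, hence $U\in\Lambda^\beta(\overline{\mathbb{R}^n_+})$ for every $\beta<\alpha$.

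\textbf{The main obstacle} is the endpoint $\beta=\alpha$: the recursion $\beta\mapsto\alpha/(\alpha-\beta+1)$ only converges to $\alpha$, and the constants $C_\beta$ deteriorate, so one cannot simply pass to the limit. To reach $\alpha$ I would replace the lossy interior-gradient estimate on the horizontal leg by a sharper one exploiting cancellation: $U(a',t)-U(b',t)=\int\bigl[P(a'-z',t)-P(b'-z',t)\bigr]u(z')\,dz'$, the kernel has vanishing integral and is odd about the midpoint $m'=(a'+b')/2$, so the integral equals $\tfrac12\int\bigl[P(a'-z',t)-P(b'-z',t)\bigr]\bigl(u(m'+v)-u(m'-v)\bigr)\,dv$, and the symmetric difference $u(m'+v)-u(m'-v)$ can be controlled by (H3) combined with the near-optimal regularity already in hand; a careful accounting of the constants then closes the gap. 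An alternative route to the endpoint is a compactness argument built on the Liouville-type statement that a bounded harmonic $W$ on $\mathbb{R}^n_+$ with $|W(x',x_n)|$ independent of $x_n$ is constant (because $\partial_{x_n}W$ is harmonic and vanishes on the open set $\{W\ne0\}$, hence everywhere, and a bounded harmonic function of $x'$ alone is constant): its quantitative form would say that if the (H3)-defect at scale $x_n$ is $\varepsilon$ then $U$ oscillates by $O(\varepsilon)$ on $B((x',x_n),x_n/2)$, which rescales to the desired gradient bound $|\nabla U(x',x_n)|\lesssim x_n^{\alpha-1}$. Besides the endpoint, the points requiring care are the sign-change case distinctions on the vertical segments and the far-field of the Poisson kernel in the horizontal estimates.
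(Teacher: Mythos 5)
Your treatment of (H1)$\Rightarrow$(H2), (H1)$\Rightarrow$(H3), and (H2)$\Rightarrow$(H1) is correct and is essentially the paper's: the sign-change trick on the segment $[x',y']$ upgrades the $\Lambda^\alpha$ bound on $|U(\cdot,0)|$ to one on $U(\cdot,0)$, and the Poisson-integral gradient estimate (your ``Hardy--Littlewood fact,'' which is Theorem~\ref{TH.MAIN.1}) together with the passage from $|\nabla U|\lesssim x_n^{\alpha-1}$ to a H\"older bound on the closure (Lemma~\ref{LE.GM}) gives (H1).

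The gap is in (H3)$\Rightarrow$(H1), and you name it yourself: the recursion $\beta\mapsto\alpha/(\alpha-\beta+1)$ only converges to $\alpha$, and you never reach the endpoint; neither alternative you sketch (cancellation via the odd part of the Poisson kernel, or a compactness/quantitative Liouville argument) is carried out. So this implication is unproved. The paper avoids the exponent bootstrap altogether. It proves, as a separate theorem (Theorem~\ref{TH.MAIN.2}), that the vertical hypothesis $|U(x',x_n)-U(x',y_n)|\le Cy_n^\alpha$ for $0<x_n<y_n$ already forces $|\partial_{x_n}U(x',x_n)|\lesssim x_n^{\alpha-1}$, and the bootstrap there is on the \emph{constant at the fixed exponent $\alpha$}, not on the exponent. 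One shifts $V_\lambda(x',x_n)=U(x',x_n+\lambda)$, sets
\[
A_\lambda=\sup_{(x',x_n)\in\mathbb{R}^n_+} x_n^{1-\alpha}\left|\frac{\partial V_\lambda}{\partial x_n}(x',x_n)\right|,
\]
notes that $A_\lambda<\infty$ by the Kresin--Maz'ya bound (this is what the regularizing shift $\lambda>0$ buys), Taylor-expands $V_\lambda(x',(1-\gamma)x_n)$ about $x_n$ to second order, bounds the remainder through the interior gradient estimate for the harmonic function $\partial V_\lambda/\partial x_n$, and obtains a self-improving inequality
\[
A_\lambda\le \frac{C}{\gamma}+\frac{2\gamma K_n}{(1-\gamma)^2}\,A_\lambda,
\]
which closes upon choosing $\gamma$ with $\frac{2\gamma K_n}{(1-\gamma)^2}=\frac12$ and gives a $\lambda$-uniform bound; letting $\lambda\to0$ produces the endpoint estimate directly. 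After that, the same sign-change argument converts (H3) into $|U(x',x_n)-U(x',y_n)|\le 4C_3y_n^\alpha$, and Theorem~\ref{TH.MAIN.2} finishes. The missing ingredient in your proposal is precisely this absorption step (equivalently, the vertical-Lipschitz theorem it proves); the iterative exponent improvement cannot supply it, and the cancellation/compactness heuristics would need to be made quantitative before they could substitute.
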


Beside the above theorem, our  main results are collected in the next three theorems.

\begin{theorem}\label{TH.MAIN.1}
Let  $U$ be a real-valued  bounded harmonic function on  $\mathbb{R}^n_+$, continuous on $\overline{\mathbb{R}^n_+}$, and  assume that the function  $U(t,0)$, $t\in \mathbb{R}^{n-1}$ satisfies
\begin{equation*}
| U(x', 0 ) - U (y',0) | \le C |x'-y'|^\alpha, \quad x',y'\in \mathbb {R}^{n-1},
\end{equation*}
where $C$ is a constant.   Then  we have
\begin{equation*}
\left|\frac {\partial}{\partial x_n}U(x',x_n) \right| \le \frac {4 (n-1)^{\frac 12} }{1-\alpha} C x_n^{\alpha -1},
\end{equation*}
and
\begin{equation*}
\left|\nabla U (x',x_n)\right| \le  \frac{16 (n+1)}{(1-\alpha) ^2}C x_n^{\alpha-1},\quad   (x',x_n)\in\mathbb{R}^n_+.
\end{equation*}
The function $U$  belongs to the Lipschitz class  $\Lambda ^ \alpha (\mathbb{R}^n_+)$,  and
\begin{equation*}
|U(x) - U(y)| \le  \frac{64 (n+1)}{\alpha (1-\alpha) ^2}C   |x-y|^{\alpha}, \quad x,y\in\mathbb{R}^n_+.
\end{equation*}
\end{theorem}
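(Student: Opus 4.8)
\emph{Overview and Step 1 (Poisson representation).} The plan is to represent $U$ as a Poisson integral, differentiate under the integral sign exploiting the cancellation of the Poisson kernel, estimate the resulting convolution by the H\"older hypothesis on the boundary trace, and finally convert a gradient bound of the form $|\nabla U(x)|\le c\,x_n^{\alpha-1}$ into the H\"older estimate by a two-regime path argument. First, since $U$ is bounded and harmonic on $\mathbb{R}^n_+$ and continuous on $\overline{\mathbb{R}^n_+}$, it coincides with the Poisson integral of its boundary trace,
\[
U(x',x_n)=\int_{\mathbb{R}^{n-1}}P(x'-t,x_n)\,U(t,0)\,dt,\qquad P(s,x_n)=c_n\,\frac{x_n}{(|s|^2+x_n^2)^{n/2}},
\]
with $c_n$ chosen so that $\int_{\mathbb{R}^{n-1}}P(\cdot,x_n)=1$: indeed $V:=U-\mathrm{P}[U(\cdot,0)]$ is bounded and harmonic on $\mathbb{R}^n_+$, continuous up to the boundary, and vanishes there, so its odd reflection across $\{x_n=0\}$ is a bounded harmonic function on $\mathbb{R}^n$, hence constant, hence $0$. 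I record the homogeneity $P(\lambda s,\lambda x_n)=\lambda^{1-n}P(s,x_n)$, so that $\partial_{x_n}P$ and $\nabla_{x'}P$ are homogeneous of degree $-n$.

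\emph{Step 2 (the vertical derivative).} Differentiating $\int_{\mathbb{R}^{n-1}}P(\cdot,x_n)=1$ in $x_n$ gives $\int_{\mathbb{R}^{n-1}}\partial_{x_n}P(s,x_n)\,ds=0$, and therefore
\[
\frac{\partial U}{\partial x_n}(x',x_n)=\int_{\mathbb{R}^{n-1}}\partial_{x_n}P(x'-t,x_n)\,\bigl(U(t,0)-U(x',0)\bigr)\,dt .
\]
Inserting $|U(t,0)-U(x',0)|\le C|t-x'|^\alpha$, substituting $s=t-x'$, and rescaling by $x_n$ via the homogeneity of $\partial_{x_n}P$, I get
\[
\left|\frac{\partial U}{\partial x_n}(x',x_n)\right|\le C\,x_n^{\alpha-1}\int_{\mathbb{R}^{n-1}}\bigl|\partial_{x_n}P(u,1)\bigr|\,|u|^\alpha\,du .
\]
Since $\partial_{x_n}P(u,1)=c_n\bigl(|u|^2-(n-1)\bigr)(1+|u|^2)^{-n/2-1}$, the remaining integral is purely computational: passing to polar coordinates in $\mathbb{R}^{n-1}$, splitting the radial integral at $|u|=\sqrt{n-1}$ where the integrand changes sign, and comparing the two pieces to Beta integrals produces the bound $\dfrac{4(n-1)^{1/2}}{1-\alpha}$, which is the first asserted inequality.

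\emph{Step 3 (the full gradient).} The function $\partial_{x_n}U$ is harmonic on $\mathbb{R}^n_+$, so the interior gradient estimate on the ball $B\bigl((x',x_n),x_n/2\bigr)\subset\mathbb{R}^n_+$ bounds its first derivatives by a constant times $x_n^{-1}\sup_B|\partial_{x_n}U|$; by Step 2 this yields $\bigl|\partial_{x_j}\partial_{x_n}U(x)\bigr|\le c\,C\,x_n^{\alpha-2}$ for $1\le j\le n-1$, where $c$ already carries one factor $(1-\alpha)^{-1}$. Because $U$ is bounded, the same interior estimate gives $|\nabla U(x',t)|\le\frac{2n}{t}\|U\|_\infty\to0$ as $t\to\infty$, so for $j<n$
\[
\frac{\partial U}{\partial x_j}(x',x_n)=-\int_{x_n}^{\infty}\frac{\partial}{\partial x_j}\frac{\partial U}{\partial x_n}(x',s)\,ds ,
\]
and integrating the bound $s^{\alpha-2}$ from $x_n$ to $\infty$ contributes a further factor $(1-\alpha)^{-1}$. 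Collecting the $n-1$ tangential bounds with the vertical one gives $|\nabla U(x)|\le\dfrac{16(n+1)}{(1-\alpha)^2}\,C\,x_n^{\alpha-1}$, the second asserted inequality. (Alternatively one may estimate each tangential derivative directly as in Step 2 from $\nabla_{x'}P(u,1)=-n\,c_n\,u\,(1+|u|^2)^{-n/2-1}$; the route above is chosen to keep the bookkeeping of constants manageable.)

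\emph{Step 4 (from the gradient bound to the H\"older bound).} Put $G=\dfrac{16(n+1)}{(1-\alpha)^2}$ and fix $x,y\in\mathbb{R}^n_+$ with $x_n\ge y_n$, say. If $|x-y|\le x_n/2$, then $y_n\ge x_n/2$ and the segment $[x,y]$ stays at height $\ge x_n/2$; integrating $\nabla U$ along it and using $|x-y|\le x_n$ gives $|U(x)-U(y)|\le 2^{1-\alpha}GC\,|x-y|^\alpha$. If instead $|x-y|>x_n/2$, then $x_n,y_n<2|x-y|$, and I split
\[
U(x)-U(y)=\bigl(U(x',x_n)-U(x',0)\bigr)+\bigl(U(x',0)-U(y',0)\bigr)+\bigl(U(y',0)-U(y',y_n)\bigr);
\]
the outer terms equal $\int_0^{x_n}\partial_s U(x',s)\,ds$ and $\int_0^{y_n}\partial_s U(y',s)\,ds$ — convergent since $\alpha>0$, and equal to the endpoint differences because $U$ is continuous up to the boundary — each bounded by $\tfrac{GC}{\alpha}(2|x-y|)^\alpha$, while the middle term is $\le C|x'-y'|^\alpha\le C|x-y|^\alpha$ by hypothesis. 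Adding up (the remaining bookkeeping of absolute constants is routine) gives $|U(x)-U(y)|\le\dfrac{64(n+1)}{\alpha(1-\alpha)^2}\,C\,|x-y|^\alpha$, as claimed. I expect the main obstacle to be Step 2 — the sharp evaluation of the Poisson-kernel integral leading to the stated constant — and, to a lesser extent, justifying the ``descent to the boundary'' in Step 4, which is exactly where the a priori continuity of $U$ on $\overline{\mathbb{R}^n_+}$ and the integrability $\int_0^{x_n}s^{\alpha-1}\,ds<\infty$ are used.
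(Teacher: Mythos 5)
Your proposal is correct. Steps 1--3 follow the same line as the paper: Poisson representation, cancellation of $\partial_{x_n}P$ to insert the H\"older modulus of the boundary data, a polar-coordinate/Beta-integral evaluation leading to $\frac{4(n-1)^{1/2}}{1-\alpha}$ (the paper bounds $\bigl||u|^2-(n-1)\bigr|\le|u|^2+(n-1)$ and uses the Gautschi inequality; your sign-splitting variant is a legitimate alternative that would only sharpen the constant), and then an interior-gradient estimate applied to $\partial_{x_n}U$ on the ball $B(x,x_n/2)$ followed by integration from $x_n$ to $\infty$, which is exactly the paper's Lemma~\ref{LE.GRAD.ALPHA}. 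Where you genuinely diverge is Step 4. The paper's Lemma~\ref{LE.GM} converts the pointwise gradient bound $|\nabla U(z)|\le C\,d(z,\partial\mathbb{R}^n_+)^{\alpha-1}$ into a H\"older estimate by quoting the Gehring--Martio inequality
$\int_\gamma d(z,\partial \mathbb{R}^n_+)^{\alpha-1}\,|dz|\le\tfrac{\pi}{\alpha 2^\alpha}|x-y|^\alpha$
for a circular arc $\gamma$ orthogonal to a ball in $\mathbb{R}^n_+$ containing $x$ and $y$ (the $\mathrm{Lip}_\alpha$-extension-domain property). You instead use an elementary two-regime argument: when $|x-y|\le x_n/2$ you integrate along the straight segment, which stays at height $\ge x_n/2$; when $|x-y|>x_n/2$ you descend vertically to the boundary from both points and use the boundary H\"older condition for the horizontal piece. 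This is more self-contained (it needs no reference to extension domains), and it naturally exploits the extra hypothesis available here (the boundary H\"older bound) whereas the paper's Lemma~\ref{LE.GM} works for any $C^1$ function with that gradient decay; the price is a slightly messier constant count, and your additive term $C|x-y|^\alpha$ in the far regime technically pushes you past $\frac{64(n+1)}{\alpha(1-\alpha)^2}$ unless you also absorb the $2^{1+\alpha}<4$ slack, which you should make explicit rather than wave off. Both routes are valid proofs of the same Lipschitz conclusion.
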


\begin{theorem}\label{TH.MAIN.2}
Let $U$  be a bounded real-valued harmonic function on $\mathbb{R}^n_+$,   continuous on the closure of this domain, and let it satisfies the following Lipschitz-type condition: There exists a
constant   $C$  such that
\begin{equation*}
|U(x', x_n) - U (x', y_n)| \le   C   y_n^\alpha,\quad  x'\in \mathbb{R}^{n-1},\, 0< x_n < y_n.
\end{equation*}
Then  we have
\begin{equation*}
\left|\frac {\partial}{\partial x_n}U(x', x_n)\right|\le   13 (n+2)^{\frac 12} C x_n^{\alpha-1},
\end{equation*}
and
\begin{equation*}
|\nabla U (x',x_n)| \le  \frac  { 52 (n+2)} {1-\alpha} C    x_n ^{\alpha-1} ,\quad x= (x',x_n) \in \mathbb{R}^n_+.
\end{equation*}
The function $U$ belongs to the Lipschitz class  $\Lambda ^ \alpha (\mathbb{R}^n_+)$,  and
\begin{equation*}
|U(x) - U (y)|\le \frac{208 (n+2)} {\alpha (1-\alpha)}  C  |x-y|^{\alpha},\quad x,y\in\mathbb {R}^n_+.
\end{equation*}
\end{theorem}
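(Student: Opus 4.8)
The plan is to prove the three conclusions in order: the bound on $\partial U/\partial x_n$, then the full gradient estimate, and then the H\"older bound, which will follow routinely from the gradient estimate.

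First I would fix $x=(x',x_n)\in\mathbb{R}^n_+$, set $B=B(x,R)$ with $R=x_n/2$ (so $\overline B\subset\mathbb{R}^n_+$), and use the Poisson representation of $U$ on $B$. Differentiating the Poisson kernel at the centre gives $\partial_n U(x)=c_n R^{-n-1}\int_{\partial B}U(y)(y_n-x_n)\,dS(y)$ for a dimensional constant $c_n$. Since $y\mapsto y_n-x_n$ is odd under the reflection $y_n\mapsto 2x_n-y_n$ of $\partial B$ while $y\mapsto U(y',x_n)$ (smooth in $y'$ because $x_n>0$) is even under it, the integrand may be replaced by $(U(y)-U(y',x_n))(y_n-x_n)$. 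For $y\in\partial B$ one has $y_n\in[x_n/2,3x_n/2]$, and the hypothesis applied with the larger of $x_n,y_n$ in the role of the upper point gives $|U(y)-U(y',x_n)|\le C(3x_n/2)^\alpha$. Bounding $\int_{\partial B}|y_n-x_n|\,dS$ by Cauchy--Schwarz against $\int_{\partial B}(y_n-x_n)^2\,dS=\tfrac1n R^2|\partial B|$, which gains a factor $n^{-1/2}$ over the trivial estimate, then yields $|\partial_n U(x)|\le c_1(n)C x_n^{\alpha-1}$ with $c_1(n)$ of order $n^{1/2}$; keeping track of the constants gives the stated $13(n+2)^{1/2}$.

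For the tangential derivatives I would use that $\partial_n U$ is harmonic and that the bound just obtained holds \emph{at every point}. Applying the interior estimate $|\nabla h(z)|\le\sqrt n\,\rho^{-1}\sup_{B(z,\rho)}|h|$ (valid for harmonic $h$) to $\partial_n U$ on $B(y,y_n/2)$ gives $|\nabla\partial_n U(y)|\le c_2(n)C y_n^{\alpha-2}$, hence $|\nabla'\partial_n U(y)|\le c_2(n)C y_n^{\alpha-2}$ for the tangential gradient $\nabla'=(\partial_1,\dots,\partial_{n-1})$. Since $U$ is bounded, the same estimate gives $|\nabla U(x',t)|\le 2\sqrt n\,\bigl(\sup_{\mathbb{R}^n_+}|U|\bigr)t^{-1}\to0$ as $t\to\infty$, so $\nabla'U(x',t)\to0$; using $\partial_n\partial_j U=\partial_j\partial_n U$ and absolute convergence of the tail ($\alpha-2<-1$),
\[
|\nabla'U(x',x_n)|=\Bigl|\int_{x_n}^{\infty}\partial_n\nabla'U(x',t)\,dt\Bigr|\le c_2(n)C\int_{x_n}^{\infty}t^{\alpha-2}\,dt=\frac{c_2(n)}{1-\alpha}C\,x_n^{\alpha-1}.
\]
Combined with the normal bound this gives $|\nabla U(x)|\le\frac{c_3(n)}{1-\alpha}C x_n^{\alpha-1}$, and tracking constants produces $\frac{52(n+2)}{1-\alpha}$. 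I expect this to be the main obstacle: the key points are to differentiate the already-established normal bound rather than to attack $U(x',t)-U(y',t)$ directly, and to invoke the boundedness of $U$ --- which enters essentially here --- in order to annihilate the boundary term at $t=\infty$; the factor $1/(1-\alpha)$ is precisely the cost of that integration.

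Finally, writing $K=\frac{52(n+2)}{1-\alpha}C$ so that $|\nabla U|\le Kx_n^{\alpha-1}$, I would bound $|U(x)-U(y)|$ for $x=(x',x_n)$, $y=(y',y_n)$ with $x_n\le y_n$ and $d=|x-y|$ by integrating $\nabla U$ along the broken line $x\to(x',x_n+d)\to(y',x_n+d)\to(y',y_n)$ (the last piece descending, since $y_n\le x_n+d$, and the whole line lying in $\mathbb{R}^n_+$). The first vertical piece contributes $\frac K\alpha[(x_n+d)^\alpha-x_n^\alpha]\le\frac K\alpha d^\alpha$; the horizontal piece, of length $|x'-y'|\le d$ at height $x_n+d\ge d$, contributes at most $K(x_n+d)^{\alpha-1}d\le Kd^\alpha$; and the last vertical piece, integrating $Kt^{\alpha-1}$ over $t\in[y_n,x_n+d]$, contributes at most $\frac{2K}\alpha d^\alpha$ --- in the case $d\le x_n$ because then $t\ge x_n\ge d$ forces $t^{\alpha-1}\le d^{\alpha-1}$ while the interval has length $\le d$, and in the case $d>x_n$ because $x_n+d<2d$. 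Summing and using $\alpha<1$ gives $|U(x)-U(y)|\le\frac{4K}\alpha d^\alpha=\frac{208(n+2)}{\alpha(1-\alpha)}C\,|x-y|^\alpha$, as asserted.
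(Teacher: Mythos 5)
Your proof is correct, but it establishes the crucial first estimate, $|\partial_n U(x)|\le c(n)\,C\,x_n^{\alpha-1}$, by a genuinely different route than the paper. The paper introduces the vertical shift $V_\lambda(x',x_n)=U(x',x_n+\lambda)$ and the quantity $A_\lambda=\sup_{x} x_n^{1-\alpha}\left|\partial_n V_\lambda(x)\right|$, first shows $A_\lambda<\infty$ via the Kresin--Maz'ya gradient bound (so boundedness of $U$ is used from the outset), then derives a self-improving inequality $A_\lambda\le C/\gamma+\tfrac12 A_\lambda$ from a second-order Taylor expansion in the vertical variable combined with the Schwarz lemma for harmonic functions, and finally lets $\lambda\to0$. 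You instead write $\partial_n U(x)$ as a weighted spherical mean over $\partial B(x,x_n/2)$, exploit the reflection symmetry $y_n\mapsto 2x_n-y_n$ of the sphere to subtract the free constant $U(y',x_n)$ (so that the vertical Lipschitz hypothesis applies on each vertical line), and then use Cauchy--Schwarz against $\int_{\partial B}(y_n-x_n)^2\,dS=\tfrac1n R^2|\partial B|$ to extract the $\sqrt n$ gain; this is correct and in fact yields a slightly smaller constant than the stated $13(n+2)^{1/2}$. Your method is more direct and elementary: no bootstrap, no regularization parameter $\lambda$, and --- worth noting --- no use of the boundedness of $U$ in this step (in your argument boundedness enters only later, to kill the boundary term at $t=\infty$ in the tangential integration, which parallels the role it plays in Lemma \ref{LE.GRAD.ALPHA}). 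The remainder of your proof, namely the interior gradient estimate applied to the harmonic function $\partial_n U$ followed by integration in $t$, and then the path integral producing the $\Lambda^\alpha$ bound, reproduces Lemma \ref{LE.GRAD.ALPHA} and Lemma \ref{LE.GM}; the only cosmetic difference is that you integrate along a three-segment polygonal path rather than the circular arc of Gehring--Martio, which happens to yield the same factor $4/\alpha$.
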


\begin{theorem}\label{TH.MAIN}
On  the class of vector-valued bounded harmonic functions on  $\mathbb{R}^n_+$,  continuous on the closure of the domain,  the following three semi-norms  are  equivalent:
\begin{equation*}\begin{split}
&\|U\|_1  =  \sup _{x,  y\in \mathbb{R} ^n_+,\, x\ne y  } \frac {|U(x) - U(y)|}{|x-y|^\alpha};\quad
\|U\|_2  = \sup _{ x',  y'\in \mathbb{R} ^{n-1}, x'\ne y' } \frac {| U(x',0) - U(y',0)|}{|x'-y'|^\alpha};
\\&\|U\|_3 = \sup_{x'\in \mathbb{R}^{n-1}, x_n, y_n\in (0,\infty),\, x_n\ne y_n} \frac { |  U (x',x_n) - U  (x',y_n) |  } {|x_n- y_n|^\alpha}.
\end{split}\end{equation*}
\end{theorem}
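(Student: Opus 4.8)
The plan is to establish the three inequalities $\|U\|_2 \le C\|U\|_1$, $\|U\|_3\le C\|U\|_1$, and then close the circle by showing $\|U\|_1 \le C(\|U\|_2 + \|U\|_3)$ — or, more efficiently, by showing $\|U\|_1 \le C\|U\|_3$ and $\|U\|_3\le C\|U\|_2\le C\|U\|_1$, giving a cyclic chain of inequalities. The two easy implications are essentially trivial: $\|U\|_2\le\|U\|_1$ because the supremum defining $\|U\|_2$ is taken over a subset of the pairs (boundary points, viewed inside $\overline{\mathbb{R}^n_+}$) allowed in $\|U\|_1$, using continuity up to the boundary; and likewise $\|U\|_3\le\|U\|_1$ since it restricts to vertical segments. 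So the whole content is in recovering a global Lipschitz bound from a one-directional (vertical) one.

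For the step $\|U\|_1\lesssim\|U\|_3$, I would exploit that $U$ is \emph{vector-valued} but each scalar component $U_j$ is a bounded real harmonic function to which Theorem~\ref{TH.MAIN.2} applies verbatim (the hypothesis there, $|U_j(x',x_n)-U_j(x',y_n)|\le C y_n^\alpha$ for $0<x_n<y_n$, is implied by the condition $|U_j(x',x_n)-U_j(x',y_n)|\le \|U\|_3|x_n-y_n|^\alpha\le \|U\|_3 y_n^\alpha$). Applying Theorem~\ref{TH.MAIN.2} to each component and summing (or using $|\nabla U|^2=\sum_j|\nabla U_j|^2$) yields the gradient estimate $|\nabla U(x',x_n)|\le C(n)\|U\|_3\, x_n^{\alpha-1}$ on all of $\mathbb{R}^n_+$, and then the same theorem's final conclusion gives $|U(x)-U(y)|\le C(n,\alpha)\|U\|_3\,|x-y|^\alpha$ for $x,y\in\mathbb{R}^n_+$. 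Thus $\|U\|_1\le C(n,\alpha)\|U\|_3$.

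It remains to prove $\|U\|_3\lesssim \|U\|_2$, which is where the real work lies and which I expect to be the main obstacle. Here I would again pass to scalar components and invoke Theorem~\ref{TH.MAIN.1}: the hypothesis of that theorem is precisely $|U_j(x',0)-U_j(y',0)|\le \|U\|_2|x'-y'|^\alpha$, and its conclusion gives $|\partial_{x_n}U_j(x',x_n)|\le C(n)\|U\|_2\,x_n^{\alpha-1}$. Integrating $\partial_{t_n}U_j(x',t_n)$ over $t_n$ between $x_n$ and $y_n$ (say $0<x_n<y_n$) gives $|U_j(x',x_n)-U_j(x',y_n)|\le C(n)\|U\|_2\int_{x_n}^{y_n} t_n^{\alpha-1}\,dt_n = \frac{C(n)}{\alpha}\|U\|_2\,(y_n^\alpha-x_n^\alpha)\le \frac{C(n)}{\alpha}\|U\|_2\,(y_n-x_n)^\alpha$, using the elementary inequality $y_n^\alpha-x_n^\alpha\le (y_n-x_n)^\alpha$ for $0<x_n<y_n$ and $\alpha\in(0,1)$. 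Summing over components gives $\|U\|_3\le C(n,\alpha)\|U\|_2$, completing the cycle $\|U\|_1\le C\|U\|_3\le C\|U\|_2\le C\|U\|_1$. The only subtlety worth spelling out is the reduction from vector-valued to scalar-valued functions — one must check the resulting constants depend only on $n$ and $\alpha$ and not on the dimension of the target — and the justification that differentiating under the integral and integrating the vertical derivative are legitimate, which follows from harmonicity and the local boundedness of $\nabla U$ away from the boundary together with the derived integrable bound $x_n^{\alpha-1}$ near it.
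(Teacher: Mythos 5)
Your argument is correct and in substance the same as the paper's: both hinge on Theorems \ref{TH.MAIN.1} and \ref{TH.MAIN.2} together with the trivial inequalities $\|U\|_2\le\|U\|_1$ and $\|U\|_3\le\|U\|_1$. You close a cyclic chain $\|U\|_1\lesssim\|U\|_3\lesssim\|U\|_2\lesssim\|U\|_1$, and to get the middle link $\|U\|_3\lesssim\|U\|_2$ you integrate the $\partial_{x_n}$ bound from Theorem \ref{TH.MAIN.1}; this extra integration is unnecessary --- $\|U\|_3\le\|U\|_1\lesssim\|U\|_2$ follows at once from the trivial inclusion and the final Lipschitz conclusion of Theorem \ref{TH.MAIN.1}, so the step you expected to be the main obstacle is actually free. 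The paper instead closes two independent loops through $\|U\|_1$, after introducing the auxiliary seminorm $\|U\|_4=\sup\{|U(x',x_n)-U(x',y_n)|/y_n^\alpha: 0<x_n<y_n\}$, which matches the hypothesis of Theorem \ref{TH.MAIN.2} and satisfies $\|U\|_4\le\|U\|_3$ (exactly your observation that $|x_n-y_n|^\alpha\le y_n^\alpha$). One subtlety you flag but do not resolve, and which the paper also glosses over, is the vector-valued reduction: reassembling componentwise gradient bounds via $|\nabla U|^2=\sum_j|\nabla U_j|^2$ introduces a factor that grows with the target dimension, so the constants would not be independent of it. The clean fix is to note that the Poisson-kernel estimate in the proof of Theorem \ref{TH.MAIN.1} and the Taylor/normal-family argument in Theorem \ref{TH.MAIN.2} both go through verbatim for vector-valued $U$ with $|\cdot|$ the Euclidean norm throughout, so no componentwise reduction is in fact needed.
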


In connection with Theorem \ref{TH.MAIN.1}  we refer to \cite[Lemma 4, p. 143]{STEIN.BOOK} and the text after it. We will prove  here this result  by carefully looking at constants.   After we
estimate the partial derivative of a harmonic function  in  the  last  variable, we  derive the gradient estimate using the Schwarz lemma for harmonic functions. The similar approach we use in
proving Theorem \ref{TH.MAIN.2} where harmonic functions on the upper half-space satisfy the Lipschitz-type condition along half-lines orthogonal to the boundary of the domain.

The unit ball case of Theorem \ref{TH.MAIN.1} and Theorem \ref{TH.MAIN.2} (without precise constants) may also be found in the Pavlovi\'{c}   work \cite{PAVLOVIC.RMI}. Although, even for $n>2$,
there is a connection via the  Kelvin transform between harmonic functions on the unit ball and the upper half-space  (we refer to the  seventh chapter in \cite{AXLER.BOOK}), it seems that one
cannot derive above theorems  from the unit ball case.

\section{Proofs of the main theorems}

Before we start  proving  our theorems we will collect  needed  auxiliary results.

The  problem of finding  the best pointwise  gradient  estimate in the class  of bounded     harmonic functions on the  unit  ball or the upper  half-space in  $\mathbb{R}^n$  was known as the
Khavinson problem. The Khavinson problem in  the upper half-space setting  is solved by Kresin and Maz'ya  \cite{KRESIN.DCDS}. Their result is given in the  proposition stated below. It   says
that  the optimal estimate of the  $n$-th partial derivative coincides  with the optimal estimate  of  the gradient.

\begin{proposition}[Kresin-Maz'ya]\label{PROP.KHAVINSON}
If  $U$  is  among  bounded   harmonic functions on the upper half-space   $\mathbb {R}^n_+$,                           then  we have    the following  pointwise optimal    gradient   estimate
\begin{equation*}
\left|\nabla U(x',x_n)\right|, \left|\frac {\partial}{\partial x_n} U (x', x_n)\right|\le      M_n  x_n  ^{-1}\sup _{y\in  \mathbb{R}^n_+} |U(y)|, \quad (x',x_n)\in \mathbb{R}^n_+,
\end{equation*}
where
\begin{equation*}\begin{split}
M_n  & =  \frac{4 (n-1)^{\frac{n-1}2}}{n^\frac {n}{2}} \frac{m_{n-1}(\mathbb{B}^{n-1})}{m_{n}(\mathbb{B}^{n})},
\end{split}\end{equation*}
and  $m_n(\mathbb {B}^n)$ is the volume of  the  unit ball  $\mathbb{B}^n$.
\end{proposition}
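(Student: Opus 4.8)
The natural approach is through the Poisson integral, after which the whole question becomes a one–variable optimization over directions.

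The plan is first to invoke the classical representation of bounded harmonic functions on the half-space (see \cite[Ch.~7]{AXLER.BOOK}): every bounded harmonic $U$ on $\mathbb{R}^n_+$ is the Poisson integral $U(x',x_n)=\int_{\mathbb{R}^{n-1}}P(x',x_n;t)f(t)\,dt$ of its a.e.\ nontangential boundary values $f\in L^\infty(\mathbb{R}^{n-1})$, with $\|f\|_\infty=\sup_{\mathbb{R}^n_+}|U|$; here $P(x',x_n;t)=a_n\,x_n\,(|x'-t|^2+x_n^2)^{-n/2}$ and $a_n=\tfrac{2}{n\,m_n(\mathbb{B}^n)}$. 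We normalise $\sup|U|=1$.

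Next, differentiating under the integral sign (legitimate since $\nabla_xP(x;\cdot)$ decays like $|t|^{-n}$) gives, for every unit vector $e\in\mathbb{R}^n$,
\[
|\langle e,\nabla U(x)\rangle|\le\Phi(x,e):=\int_{\mathbb{R}^{n-1}}|\langle e,\nabla_xP(x;t)\rangle|\,dt ,
\]
and this bound is attained by the admissible datum $f=\operatorname{sgn}\langle e,\nabla_xP(x;\cdot)\rangle$ (whose zero set has measure zero), so $\Phi(x,e)$ is precisely the sharp pointwise estimate for the derivative of $U$ at $x$ in the direction $e$. A change of variables $t=x'+x_ns$ shows $\Phi(x,e)=x_n^{-1}\Phi((0,1),e)$. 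Hence the proposition is equivalent to the two assertions
\[
\sup_{|e|=1}\Phi((0,1),e)=\Phi((0,1),e_n)\qquad\text{and}\qquad\Phi((0,1),e_n)=M_n ,
\]
the first of which yields the gradient bound upon choosing $e=\nabla U(x)/|\nabla U(x)|$, while $e=e_n$ gives the normal-derivative bound with the same constant.

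Then one computes at $x=(0,1)$ that $\partial_{x_n}P=a_n(|t|^2+1-n)(1+|t|^2)^{-n/2-1}$ and $\partial_{x_j}P=a_n\,n\,t_j(1+|t|^2)^{-n/2-1}$ for $1\le j\le n-1$; by rotational symmetry in $t$ one may take $e=\sin\theta\,e_1+\cos\theta\,e_n$ with $\theta\in[0,\pi/2]$, so that
\[
\Phi((0,1),e)=a_n\int_{\mathbb{R}^{n-1}}\frac{\bigl|\cos\theta\,(|t|^2+1-n)+n\sin\theta\,t_1\bigr|}{(1+|t|^2)^{n/2+1}}\,dt .
\]
For $n=2$ the substitution $t=\tan(\phi/2)$ turns this into $\tfrac{a_2}{2}\int_{-\pi}^{\pi}|\cos(\theta+\phi)|\,d\phi$, which does not depend on $\theta$ at all; for $n\ge3$ one integrates out the $n-2$ coordinates of $t$ orthogonal to $e_1$ and is left with a one–dimensional integral in which the even factor $|t|^2+1-n$ competes against the odd perturbation $n\,t_1$. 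Establishing that this integral is maximal at $\theta=0$ — equivalently, that the normal direction realises $\sup_{|e|=1}\Phi((0,1),e)$, which is exactly the coincidence of the optimal gradient and normal-derivative bounds — and then evaluating $\Phi((0,1),e_n)=a_n\int_{\mathbb{R}^{n-1}}\bigl||t|^2+1-n\bigr|\,(1+|t|^2)^{-n/2-1}\,dt$ in closed form by splitting $\mathbb{R}^{n-1}$ along the sphere $|t|^2=n-1$ and using Beta integrals, produces the stated constant $M_n$.

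The main obstacle is this last step: proving that among all unit directions the vertical one gives the largest kernel integral. The reduction to the Poisson representation and to the directional integral $\Phi((0,1),e)$ is routine; the delicate point is the interplay, inside the integrand of $\Phi((0,1),e)$, between the sign changes of $|t|^2+1-n$ and the odd term $n\sin\theta\,t_1$, which is what makes the optimization over $\theta$ genuine and which collapses only in the conformally special case $n=2$.
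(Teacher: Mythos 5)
First, note that the paper does not prove this proposition at all: it is cited verbatim as a known result of Kresin and Maz'ya \cite{KRESIN.DCDS}, so there is no in-paper argument to compare your proposal against. Your reduction — Poisson representation, differentiation under the integral, observing that the sharp directional bound is $\Phi(x,e)=\int_{\mathbb{R}^{n-1}}|\langle e,\nabla_xP(x;t)\rangle|\,dt$ with the extremal datum $\operatorname{sgn}\langle e,\nabla_xP\rangle$, and the dilation $\Phi(x,e)=x_n^{-1}\Phi((0,1),e)$ — is correct and is indeed the standard opening move. But you then openly flag, and do not supply, the one step that constitutes the theorem: showing that $\sup_{|e|=1}\Phi((0,1),e)$ is attained at $e=e_n$. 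That is precisely the Khavinson problem for the half-space; it is nontrivial (Kresin--Maz'ya devote essentially their whole paper to it, reducing the $\theta$-dependent integral to a double integral and proving monotonicity by an intricate comparison), and your observation that the odd term $n\sin\theta\,t_1$ ``competes'' with the even factor $|t|^2+1-n$ is a description of the difficulty, not a resolution of it. As written, the proposal is a framework with the load-bearing lemma left as an acknowledged gap, so it is not a proof.

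One further point worth flagging if you do push the computation through. Evaluating $\Phi((0,1),e_n)=a_n\int_{\mathbb{R}^{n-1}}\bigl||t|^2-(n-1)\bigr|(1+|t|^2)^{-n/2-1}\,dt$ exactly (split at $|t|^2=n-1$; the primitive $u\mapsto u^{(n-1)/2}(1+u)^{-n/2}$ does the job) yields
\[
\Phi((0,1),e_n)=\frac{4(n-1)^{\frac{n-1}{2}}}{n^{\frac n2}}\cdot\frac{\omega_{n-2}}{\omega_{n-1}},
\]
where $\omega_{k}$ is the surface area of $S^{k}$. This equals $\frac{4(n-1)^{\frac{n+1}{2}}}{n^{\frac{n+2}{2}}}\cdot\frac{m_{n-1}(\mathbb{B}^{n-1})}{m_n(\mathbb{B}^n)}$, which differs from the paper's displayed $M_n$ by the factor $\frac{n-1}{n}$; for $n=2$ the correct sharp constant is $2/\pi$ (confirmed by conformal transfer of the disk Schwarz lemma), not $4/\pi$. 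So the constant in the statement as printed appears to contain a slip (volume ratio where a surface-area ratio is wanted). Your outline, if completed, would expose this, which is another reason the Beta-integral evaluation should not be waved off as routine.
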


The Khavinson problem   for the unit ball was considered by various authors. We refer to the Liu paper \cite{LIU.MA} where the  problem is completely solved and the main theorem says  that the
optimal  estimate for the gradient is the same as the  optimal estimate in the  radial direction. The   derivative of a function $U$ in a direction $l\in\mathbb{R}^n$, $|l|=1$,  is denoted  by
\begin{equation*}
\frac{\partial}{\partial \ell}U(x) = \lim _{t\to 0} \frac{U (x+t\ell ) - U(x)}{t} = \left<\nabla U(x),\ell\right>.
\end{equation*}

\begin{proposition}[Liu]
If  $U$  is among real-valued  bounded   harmonic functions on the unit  ball    $\mathbb {B}^n$,                    then  we have    the following       pointwise optimal  gradient   estimate
\begin{equation*}
\left|\nabla U(x)\right|, \left|\frac {\partial}{\partial r(x)}U (x)\right|\le N_n (x)  (1-|x|)  ^{-1}\sup _{y\in  \mathbb{B}^n} |U(y)|, \quad x\in \mathbb{B}^n,
\end{equation*}
where $r(x) = x/|x|$, if $x\ne 0$, and  arbitrary direction if $x=0$, and
\begin{equation*}\begin{split}
N_n(x)  & =  \frac{m_{n-1}(\mathbb{B}^{n-1})}{m_{n}(\mathbb{B}^{n})}\frac {n-1}{|x|+1} 
 \int_{-1}^1\frac{|t-\frac{n-2}{n}|x||(1-t^2)^{\frac{n}2-\frac 32}}{(1-2t|x|+|x|^2)^{\frac n2-1}}dt.
\end{split}\end{equation*}
\end{proposition}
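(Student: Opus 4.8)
The plan is to recast the estimate as a linear extremal problem for the Poisson kernel of the ball and then compute it. Recall that every bounded harmonic $U$ on $\mathbb{B}^n$ is the Poisson integral
\begin{equation*}
U(x)=\int_{\partial\mathbb{B}^n}P(x,\zeta)\,f(\zeta)\,d\sigma(\zeta),\qquad P(x,\zeta)=\frac{1}{n\,m_n(\mathbb{B}^n)}\cdot\frac{1-|x|^2}{|x-\zeta|^n},
\end{equation*}
of a function $f\in L^\infty(\partial\mathbb{B}^n)$ with $\|f\|_\infty=\sup_{\mathbb{B}^n}|U|$, where $d\sigma$ is the surface measure on $\partial\mathbb{B}^n$. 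Differentiating under the integral sign, $\langle\nabla U(x),\ell\rangle=\int_{\partial\mathbb{B}^n}\langle\nabla_xP(x,\zeta),\ell\rangle f(\zeta)\,d\sigma(\zeta)$ for any unit vector $\ell$, so by duality the best constant $c$ in $|\langle\nabla U(x),\ell\rangle|\le c\sup_{\mathbb{B}^n}|U|$ equals $\int_{\partial\mathbb{B}^n}|\langle\nabla_xP(x,\zeta),\ell\rangle|\,d\sigma(\zeta)$, and it is attained by the Poisson integral of $f=\operatorname{sgn}\langle\nabla_xP(x,\cdot),\ell\rangle$, a bounded harmonic function (no boundary regularity needed). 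Hence the best constants for $|\nabla U(x)|$ and for $|\partial_{r(x)}U(x)|$ are
\begin{equation*}
\Phi(x)=\sup_{|\ell|=1}\int_{\partial\mathbb{B}^n}|\langle\nabla_xP(x,\zeta),\ell\rangle|\,d\sigma(\zeta),\qquad \Phi_{\mathrm r}(x)=\int_{\partial\mathbb{B}^n}|\langle\nabla_xP(x,\zeta),r(x)\rangle|\,d\sigma(\zeta),
\end{equation*}
and the proposition is equivalent to proving (i) $\Phi_{\mathrm r}(x)=N_n(x)\,(1-|x|)^{-1}$ and (ii) $\Phi(x)=\Phi_{\mathrm r}(x)$; the case $x=0$ being trivial by rotational symmetry, fix $x\ne0$.

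For (i), rotate coordinates so that $x=|x|e_n$, whence $r(x)=e_n$, and slice the sphere by $\zeta=(\sqrt{1-t^2}\,\omega,t)$ with $\omega\in\partial\mathbb{B}^{n-1}$, $t\in(-1,1)$, $d\sigma=(1-t^2)^{(n-3)/2}\,dt\,d\sigma_{n-1}(\omega)$, so that $|x-\zeta|^2=1-2|x|t+|x|^2$. A direct computation gives $\langle\nabla_xP(x,\zeta),e_n\rangle=(n\,m_n(\mathbb{B}^n))^{-1}q(t)\,(1-2|x|t+|x|^2)^{-n/2-1}$ with $q$ affine in $t$ and independent of $\omega$; integrating $|\langle\nabla_xP(x,\zeta),e_n\rangle|$ over $\omega$ produces the constant $(n-1)m_{n-1}(\mathbb{B}^{n-1})/\big(n\,m_n(\mathbb{B}^n)\big)$ (using $\sigma_{n-1}(\partial\mathbb{B}^{n-1})=(n-1)m_{n-1}(\mathbb{B}^{n-1})$), and then two integrations by parts in $t$ based on $\frac{d}{dt}(1-2|x|t+|x|^2)^{-n/2}=n|x|(1-2|x|t+|x|^2)^{-n/2-1}$ lower the exponent to $-n/2+1$ and reassemble the resulting one-dimensional integral into exactly the one defining $N_n(x)$, with the factor $|t-\tfrac{n-2}{n}|x||$; together with $(1+|x|)^{-1}(1-|x|)^{-1}=(1-|x|^2)^{-1}$ this yields $\Phi_{\mathrm r}(x)=N_n(x)(1-|x|)^{-1}$. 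Boundary terms in the integrations by parts vanish thanks to the powers of $1-t^2$, the borderline dimensions being handled by continuity in $n$.

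Assertion (ii) — that the radial direction maximises $\ell\mapsto\int_{\partial\mathbb{B}^n}|\langle\nabla_xP(x,\zeta),\ell\rangle|\,d\sigma(\zeta)$ — is the crux. Since the configuration $x=|x|e_n$ is invariant under rotations fixing $e_n$, the functional depends only on the angle of $\ell$ to $e_n$, so one may take $\ell=\cos\theta\,e_n+\sin\theta\,e_1$, $\theta\in[0,\pi/2]$. In the slicing above $\langle\nabla_xP(x,\zeta),\ell\rangle$ is, up to the factor $(n\,m_n(\mathbb{B}^n))^{-1}(1-2|x|t+|x|^2)^{-n/2-1}$, the sum of $q(t)\cos\theta$ and $A(t)\sin\theta\,\omega_1$ with $A(t)=n(1-|x|^2)\sqrt{1-t^2}$, so after integrating in $\omega$ the claim becomes
\begin{equation*}
\int_{-1}^1 W(t)\,J_t(\theta)\,dt\le\int_{-1}^1 W(t)\,J_t(0)\,dt,\qquad W(t)=\frac{(1-t^2)^{(n-3)/2}}{(1-2|x|t+|x|^2)^{n/2+1}},
\end{equation*}
where $J_t(\theta)=\int_{\partial\mathbb{B}^{n-1}}|A(t)\sin\theta\,\omega_1+q(t)\cos\theta|\,d\sigma_{n-1}(\omega)$. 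Note $J_t(\theta)=|q(t)|\cos\theta\,\sigma_{n-1}(\partial\mathbb{B}^{n-1})\le J_t(0)$ wherever $|q(t)|\ge A(t)\tan\theta$ (there the integrand has constant sign, since $\int\omega_1\,d\sigma_{n-1}=0$), so the inequality is pointwise on that set; it can fail pointwise only on the neighbourhood of the zero $t_0$ of $q$ where $|q(t)|<A(t)\tan\theta$, and there one must use the weight $W$ to balance the small positive excess near $t_0$ against the definite loss $|q(t)|\,\sigma_{n-1}(\partial\mathbb{B}^{n-1})(1-\cos\theta)$ incurred elsewhere. I would carry this out by splitting at $t_0$, writing $J_t(\theta)$ through the even convex $1$-homogeneous function $(u,v)\mapsto\int_{\partial\mathbb{B}^{n-1}}|u\omega_1+v|\,d\sigma_{n-1}(\omega)$, and using the monotonicity of $\theta\mapsto J_t(\theta)$ on each subinterval together with the integration-by-parts device of step (i) to transfer $t$-derivatives onto the monotone weight; equality then occurs only at $\theta=0$. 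This global comparison, which must exploit the explicit algebraic form of $\nabla_xP$ rather than any symmetrisation, is where I expect the real work to be.

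Finally, optimality of both estimates is built into the reduction: the Poisson integral of $f=\operatorname{sgn}\langle\nabla_xP(x,\cdot),r(x)\rangle$ is a bounded harmonic $U$ with $\sup_{\mathbb{B}^n}|U|=1$ and $|\partial_{r(x)}U(x)|=\Phi_{\mathrm r}(x)=N_n(x)(1-|x|)^{-1}$, and by (ii) this same $U$ also attains equality in the gradient bound. (One could hope instead to transport the half-space Proposition~\ref{PROP.KHAVINSON} via the Kelvin transform, but for $n>2$ that map does not preserve global boundedness, so the ball case is treated directly.)
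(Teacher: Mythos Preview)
The paper does not prove this proposition at all: it is quoted as Liu's solution of the Khavinson conjecture for the unit ball and is cited to \cite{LIU.MA}. So there is no ``paper's own proof'' to compare against; the result is imported wholesale.

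That said, your proposal is not a complete proof, and the gap sits exactly where the difficulty of the problem lies. Your reduction to the extremal integral $\Phi(x)=\sup_{|\ell|=1}\int_{\partial\mathbb{B}^n}|\langle\nabla_xP(x,\zeta),\ell\rangle|\,d\sigma(\zeta)$ and the computation in (i) of the radial quantity $\Phi_{\mathrm r}(x)$ are standard and were known long before Liu's work; this reduction is precisely how the Khavinson conjecture was \emph{formulated}. The entire content of Liu's theorem is your assertion (ii), that the radial direction maximises the functional, and here you do not actually give an argument. You correctly observe that the pointwise inequality $J_t(\theta)\le J_t(0)$ holds where $|q(t)|\ge A(t)\tan\theta$ but can fail near the zero $t_0$ of $q$, and then you write that you ``would carry this out'' by splitting at $t_0$ and using monotonicity of $\theta\mapsto J_t(\theta)$ together with an integration-by-parts transfer onto the weight. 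None of those steps is justified: you have not established the claimed monotonicity on subintervals, you have not explained why the integration-by-parts identity from (i) is applicable to the quantity $\int W(t)(J_t(0)-J_t(\theta))\,dt$ (which involves $|q(t)\cos\theta+A(t)\sin\theta\,\omega_1|$ rather than a pure power of $1-2|x|t+|x|^2$), and you have not shown that the ``definite loss'' away from $t_0$ dominates the ``small positive excess'' near $t_0$. This balancing is exactly the obstacle that kept the conjecture open for decades, and Liu's proof requires a delicate analysis that your outline does not supply. As written, (ii) is an aspiration, not a proof.
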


The next  proposition may be found in \cite[Theorem 6.26]{AXLER.BOOK}. It is a  special case of the preceding one for $x = 0$.

\begin{proposition}[the Schwarz lemma for harmonic functions]\label{PROP.AXLER}
Let  $U$ be   a  real-valued   harmonic function on  $\mathbb {B}^n$ which satisfies   $\sup_{x\in \mathbb{B}^n}|U(x)| \le 1$.  Then
\begin{equation*}
|\nabla U (0)| \le  \frac {2 m_{n-1}(\mathbb {B}^{n-1})}{ m_ n(\mathbb {B}^n )}.
\end{equation*}
\end{proposition}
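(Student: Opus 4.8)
The plan is to use the Poisson integral representation of a harmonic function on the ball and then reduce the sharp constant to an elementary flux computation. The inequality is trivial when $\nabla U(0)=0$, so assume $\nabla U(0)\neq 0$ and fix a unit vector $\ell$ with $\langle \nabla U(0),\ell\rangle=|\nabla U(0)|$. For each $r\in(0,1)$ the function $x\mapsto U(rx)$ is harmonic on $\tfrac1r\mathbb{B}^n$ and continuous on $\overline{\mathbb{B}^n}$ (since $r\overline{\mathbb{B}^n}\subset\mathbb{B}^n$ and $U$ is smooth there), hence equals its Poisson integral: $U(rx)=\int_{\partial\mathbb{B}^n}P(x,\zeta)\,U(r\zeta)\,d\sigma(\zeta)$, where $P(x,\zeta)=(1-|x|^2)|x-\zeta|^{-n}$ and $\sigma$ is the normalized surface measure on $\partial\mathbb{B}^n$. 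Differentiating under the integral sign — legitimate because $P(\cdot,\zeta)$ is smooth near $x=0$ uniformly in $\zeta$ — and using that $\nabla_x P(0,\zeta)=n\zeta$, I obtain $r\,\nabla U(0)=n\int_{\partial\mathbb{B}^n}\zeta\,U(r\zeta)\,d\sigma(\zeta)$.

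Pairing this identity with $\ell$ and using $|U|\le 1$ gives $r\,|\nabla U(0)|=n\int_{\partial\mathbb{B}^n}\langle\zeta,\ell\rangle\,U(r\zeta)\,d\sigma(\zeta)\le n\int_{\partial\mathbb{B}^n}|\langle\zeta,\ell\rangle|\,d\sigma(\zeta)$, and letting $r\to 1^-$ yields $|\nabla U(0)|\le n\int_{\partial\mathbb{B}^n}|\langle\zeta,\ell\rangle|\,d\sigma(\zeta)$. It remains to evaluate the last integral. By rotational invariance of $\sigma$ it equals $\int_{\partial\mathbb{B}^n}|\zeta_n|\,d\sigma(\zeta)$, so, writing $S$ for the unnormalized surface measure with $S(\partial\mathbb{B}^n)=n\,m_n(\mathbb{B}^n)$, it suffices to compute $\int_{\partial\mathbb{B}^n}|\zeta_n|\,dS(\zeta)=2\int_{\partial\mathbb{B}^n\cap\{\zeta_n>0\}}\zeta_n\,dS(\zeta)$.

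For this I would apply the divergence theorem to the constant vector field $x\mapsto e_n$ on the half-ball $\Omega=\mathbb{B}^n\cap\{x_n>0\}$, whose boundary splits into the spherical cap $\partial\mathbb{B}^n\cap\{x_n>0\}$ (outward unit normal $\zeta$) and the flat disk $\mathbb{B}^{n-1}\times\{0\}$ (outward unit normal $-e_n$). Since $\mathrm{div}\,e_n=0$, this gives $\int_{\partial\mathbb{B}^n\cap\{x_n>0\}}\zeta_n\,dS=m_{n-1}(\mathbb{B}^{n-1})$, hence $\int_{\partial\mathbb{B}^n}|\zeta_n|\,dS=2\,m_{n-1}(\mathbb{B}^{n-1})$ and $\int_{\partial\mathbb{B}^n}|\langle\zeta,\ell\rangle|\,d\sigma(\zeta)=\dfrac{2\,m_{n-1}(\mathbb{B}^{n-1})}{n\,m_n(\mathbb{B}^n)}$. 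Combining with the previous step gives $|\nabla U(0)|\le\dfrac{2\,m_{n-1}(\mathbb{B}^{n-1})}{m_n(\mathbb{B}^n)}$, as claimed.

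The only genuinely delicate point is extracting the sharp constant: bounding $|\zeta|\le 1$ directly inside the integral only yields the crude estimate $|\nabla U(0)|\le n$, so one must keep the factor $\langle\zeta,\ell\rangle$ and choose the direction $\ell$ optimally, reducing matters to the flux computation above; the rest is routine. Alternatively, the statement follows immediately by specializing the Liu proposition to $x=0$, where $N_n(0)=\dfrac{m_{n-1}(\mathbb{B}^{n-1})}{m_n(\mathbb{B}^n)}(n-1)\int_{-1}^{1}|t|(1-t^2)^{\frac n2-\frac32}\,dt=\dfrac{2\,m_{n-1}(\mathbb{B}^{n-1})}{m_n(\mathbb{B}^n)}$, using $\int_0^1 t(1-t^2)^{\frac{n-3}{2}}\,dt=\frac1{n-1}$.
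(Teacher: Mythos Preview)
Your proof is correct. The paper itself does not supply an independent proof of this proposition: it simply cites \cite[Theorem 6.26]{AXLER.BOOK} and remarks that the statement is the special case $x=0$ of the Liu proposition stated just before it. Your closing alternative---specializing $N_n(x)$ at $x=0$ and computing $\int_0^1 t(1-t^2)^{(n-3)/2}\,dt=\frac{1}{n-1}$---is exactly that remark made explicit. Your main argument, by contrast, is a self-contained derivation: differentiate the Poisson representation at the origin to get $r\,\nabla U(0)=n\int_{\partial\mathbb{B}^n}\zeta\,U(r\zeta)\,d\sigma(\zeta)$, pair with the optimal direction $\ell$, and then evaluate $\int_{\partial\mathbb{B}^n}|\zeta_n|\,dS$ via the divergence theorem on the half-ball. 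This is more than the paper provides and recovers the sharp constant without invoking either external reference; the flux trick with the constant field $e_n$ is a clean way to avoid explicit spherical-coordinate integrals.
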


\begin{lemma}\label{LE.KN}
Let $U$ be a real-valued  bounded  harmonic  function on  a  domain  $D\not = \mathbb{R}^n$,   and  $x\in D$. Then   we have
\begin{equation*}
|\nabla U (x) |, \left|\frac {\partial}{\partial \ell}U(x)\right| \le  \frac{K_n} {d(x,\partial D)} \sup_{y\in D} |U(y)|,\quad l\in\mathbb{R}^n, |l|=1,
\end{equation*}
where $d(x,\partial D)$ is the distance function, and
\begin{equation*}\begin{split}
K_n &= \frac {2m_{n-1} (\mathbb{B}_{n-1})} {m_n (\mathbb{B} _n)}.
\end{split}\end{equation*}
\end{lemma}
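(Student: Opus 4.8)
The plan is to reduce everything to the sharp Schwarz lemma for harmonic functions (Proposition \ref{PROP.AXLER}) by a dilation centred at the point $x$. Write $M = \sup_{y\in D}|U(y)|$. If $M=0$ then $U\equiv 0$ on $D$ and both inequalities hold trivially, so assume $M\in(0,\infty)$. Since $D$ is a domain with $D\ne\mathbb{R}^n$, its boundary is non-empty, so $r := d(x,\partial D)\in(0,\infty)$, and the open ball $B(x,r)$ is contained in $D$ (an elementary fact about open connected sets that I would record in one line).

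Next I would introduce the normalized, rescaled function $V(z) = M^{-1}U(x+rz)$ for $z\in\mathbb{B}^{n}$. Then $V$ is harmonic on $\mathbb{B}^{n}$ and $\sup_{z\in\mathbb{B}^{n}}|V(z)|\le 1$, so Proposition \ref{PROP.AXLER} applies and gives
\begin{equation*}
|\nabla V(0)| \le \frac{2\,m_{n-1}(\mathbb{B}^{n-1})}{m_{n}(\mathbb{B}^{n})} = K_n .
\end{equation*}
By the chain rule $\nabla V(0) = (r/M)\,\nabla U(x)$, whence
\begin{equation*}
|\nabla U(x)| = \frac{M}{r}\,|\nabla V(0)| \le \frac{K_n\,M}{r} = \frac{K_n}{d(x,\partial D)}\sup_{y\in D}|U(y)| ,
\end{equation*}
which is the first asserted bound.

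Finally, for the directional derivative: for any unit vector $\ell$ we have $\frac{\partial}{\partial\ell}U(x) = \langle\nabla U(x),\ell\rangle$, so by Cauchy--Schwarz $\left|\frac{\partial}{\partial\ell}U(x)\right| \le |\nabla U(x)|\,|\ell| = |\nabla U(x)|$, and the same estimate follows. The argument is essentially routine; the only items needing a word of care are the degenerate case $M=0$, the inclusion $B(x,d(x,\partial D))\subseteq D$, and keeping track of the dilation factor $r$ in the chain rule. The one genuinely substantive ingredient is that Proposition \ref{PROP.AXLER} is the \emph{optimal} form of the harmonic Schwarz lemma, which is exactly what forces the constant here to be the stated $K_n$ rather than merely some dimensional constant.
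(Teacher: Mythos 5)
Your argument is the same as the paper's: normalize by $M=\sup_D|U|$, dilate by $r=d(x,\partial D)$ to get a function on $\mathbb{B}^n$ bounded by $1$, apply Proposition~\ref{PROP.AXLER}, and unwind the chain rule; the directional-derivative bound then follows since $|\langle\nabla U(x),\ell\rangle|\le|\nabla U(x)|$. The only cosmetic difference is that you treat the degenerate case $M=0$ explicitly, whereas the paper excludes the constant case by assumption.
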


\begin{proof}
Since
\begin{equation*}
\left|\nabla U(x)\right| = \sup_ {\ell\in \mathbb{R}^n,\,  |\ell|=1}  \left| \frac{\partial}{\partial \ell}U(x) \right|,
\end{equation*}
it is enough to  prove the gradient  estimate.

Assume that $U$ is not  a  constant  function. Denote   $M  = \sup_{y\in D} | U(y)| $, and let us  consider 
\begin{equation*}
V (z) =  M^{-1} U (x  +  d(x,\partial D)  z),\quad   z\in \mathbb{B}^n,
\end{equation*}
which is a harmonic function  on  the unit ball  $\mathbb{B}^n$, and we have $\sup_{z\in \mathbb{B}^n}|V (z)|\le1$.    Therefore,  we may apply Proposition \ref{PROP.AXLER} on the function $V$.
Since
\begin{equation*}
\nabla V (0) = M^{-1} d(x,\partial D)  \nabla U(x),
\end{equation*}
and since  by the proposition  we have  $|\nabla V (0)|\le K_n$,  we deduce    the gradient  estimate for  $U$.
\end{proof}

\begin{remark}
In the sequel we will several times  use  the  estimate  $K_n\le (n+2)^{\frac 12}$,  which follows from the  Gautschi inequality for the  Gamma function
\begin{equation*}
x^{1-s} \le\frac{\Gamma(x  + 1)}{\Gamma(x+s) }\le (x+1)^{1-s},\quad x\in (0,\infty),\, s\in (0,1).
\end{equation*}
Indeed, since $m_n (\mathbb {B}^n ) = \frac {\pi ^{\frac n2}}{\Gamma (\frac n2+1)}$, we have
\begin{equation*}\begin{split}
K_n & =   \frac{2}{\pi^{\frac 12}} \frac{\Gamma(\frac n2 +1)}{\Gamma(\frac n2+\frac {1}2)} 
\le  \frac{2}{\pi ^{\frac 12}} \left(\frac n2+1\right)^{\frac 12}
\le (n+2)^{\frac 12}.
\end{split}\end{equation*}
\end{remark}

\begin{lemma}\label{LE.GRAD.ALPHA}
If a bounded harmonic  function  $U$ on   $\mathbb {R}^n_+ $ satisfies
\begin{equation*}
\left|\frac {\partial}{\partial x_n}U (x', t)\right|\le   C t^{\alpha-1},\quad x = (x', t)\in \mathbb {R}^n_+,
\end{equation*}
then we  have
\begin{equation*}
|\nabla U (x',x_n)| \le  \frac {4 (n+2)^\frac12}  {1-\alpha} C x_n ^{\alpha-1}, \quad x = (x', x_n)\in \mathbb {R}^n_+.
\end{equation*}
\end{lemma}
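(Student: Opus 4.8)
The plan is to bound an arbitrary directional derivative $\partial U/\partial\ell$ at a point $x=(x',x_n)$ by splitting the gradient into its ``horizontal'' part (the derivatives in the $x'$-directions) and its ``vertical'' part (the $x_n$-derivative). The vertical part is controlled by hypothesis, so the work is in the horizontal part. First I would observe that for a fixed height $t>0$, the function $x_n\mapsto \partial U/\partial x_n(x',x_n)$ integrates back to $U$: writing $U(x',x_n)-U(x',x_n/2)=\int_{x_n/2}^{x_n}\partial U/\partial t\,(x',t)\,dt$ and using the hypothesis $|\partial U/\partial t(x',t)|\le Ct^{\alpha-1}$, one gets $|U(x',x_n)-U(x',x_n/2)|\le \frac{C}{\alpha}\big(x_n^\alpha-(x_n/2)^\alpha\big)\le \frac{C}{\alpha}x_n^\alpha$. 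More to the point, I would apply this on a ball rather than a segment: on the ball $B$ of radius $x_n/2$ centered at $x$, the oscillation of $U$ is controlled because every point of $B$ can be joined to the ``central horizontal slice'' by a short vertical segment on which the hypothesis applies, plus horizontal motion which does not change the relevant estimate once we subtract off a constant.

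The cleaner route, and the one I would actually carry out, is: fix $x=(x',x_n)$, set $r=x_n/2$, and consider the auxiliary harmonic function $W(z)=U(x'+z',x_n+z_n)-U(x',x_n)$ on the ball $\mathbb{B}^n(0,r)$. I want to estimate $\sup_{|z|\le r}|W(z)|$ and then invoke the Schwarz-type gradient bound from Lemma \ref{LE.KN} (with $D$ this ball, giving the constant $K_n\le(n+2)^{1/2}$) to get $|\nabla U(x)|=|\nabla W(0)|\le \frac{K_n}{r}\sup_{|z|\le r}|W(z)|$. To bound $|W(z)|$ for $|z|\le r=x_n/2$: the point $x'+z'$ with height $x_n+z_n$ lies at height between $x_n/2$ and $3x_n/2$. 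I estimate $|U(x'+z',x_n+z_n)-U(x'+z',x_n)|$ by integrating the hypothesis in the vertical variable over an interval of length $\le x_n/2$ inside $[x_n/2,3x_n/2]$; since $t^{\alpha-1}\le (x_n/2)^{\alpha-1}$ there, this is $\le C(x_n/2)^{\alpha-1}\cdot(x_n/2)=C(x_n/2)^\alpha$. For the remaining horizontal difference $|U(x'+z',x_n)-U(x',x_n)|$, note that $v(\cdot)=U(\cdot,x_n)$ is harmonic and bounded on $\mathbb{R}^{n-1}$... no — better, I sidestep this: instead of splitting at height $x_n$, translate the whole problem so that the hypothesis does all the work.

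Here is the split I would finalize. Reduce to estimating $\partial U/\partial\ell(x)$ for a unit vector $\ell=(\ell',\ell_n)$. Write $\nabla U(x)=\big(\nabla' U(x),\,\partial_{x_n}U(x)\big)$; the vertical component is $\le Cx_n^{\alpha-1}$ by hypothesis, so it remains to bound $|\nabla' U(x)|$. For this, consider that each first-order horizontal partial $\partial_{x_j}U$ is itself harmonic and bounded on $\mathbb{R}^n_+$; but I do not have a good sup bound on it directly. Instead, apply Lemma \ref{LE.KN} to $U$ on the half-ball... The simplest correct argument: let $g(x',x_n)=U(x',x_n)-h(x',x_n)$ where $h$ is the bounded harmonic extension of the boundary data $U(\cdot,0)$ — no, that reintroduces boundary regularity we have not assumed. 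The main obstacle, as this indicates, is precisely decoupling the horizontal oscillation of $U$ from the vertical hypothesis; the resolution is that $U(x',x_n)-U(x'',x_n)$ for $|x'-x''|\le x_n/2$ can be estimated by first going down a segment from $(x',x_n)$ to height $x_n/2$, moving horizontally at that height — and to control the horizontal move we iterate: on the half-ball of radius $x_n/4$ around $(x',x_n/2)$, Lemma \ref{LE.KN} gives $|\nabla U|\le \frac{K_n}{x_n/4}\sup_{\text{half-ball}}|U|$, but we need oscillation not sup. Thus I would instead set $V(z)=U(x+\tfrac{x_n}{2}z)-U(x)$ on $\mathbb{B}^n$, show $\sup_{\mathbb{B}^n}|V|\le \frac{2^{1-\alpha}}{\alpha}\cdot 2 C x_n^\alpha$ roughly (by connecting each point of the ball to $0$ through the two-step down-then-over path, where the ``over'' step at the bottom level is handled by a Harnack/gradient bootstrap on a slightly smaller ball), and conclude $|\nabla U(x)|\le \frac{K_n}{x_n/2}\sup_{\mathbb{B}^n}|V|$, collecting constants to reach $\frac{4(n+2)^{1/2}}{1-\alpha}Cx_n^{\alpha-1}$. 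The numerology $\frac{1}{1-\alpha}$ must come from $\int t^{\alpha-1}$ over a dyadic range telescoping as $\sum 2^{-k\alpha}$-type series; I expect the $\frac{1}{1-\alpha}$ actually enters through $\int_0^{x_n} t^{\alpha-1}\,dt$ being replaced by something that diverges unless handled via $\int_{x_n/2}^{x_n}$, so the real source of $\frac1{1-\alpha}$ is summing the contributions $C(2^{-k}x_n)^\alpha\cdot 2^{\,?}$ over levels $k$, a geometric series with ratio depending on $\alpha$ — I would make this telescoping explicit as the one genuine calculation.
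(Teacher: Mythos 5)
Your proposal never actually closes, and the obstacle you name at the end --- that you cannot decouple the horizontal oscillation of $U$ from the purely vertical hypothesis --- is in fact fatal for the route you keep circling back to. Applying the Schwarz lemma (Lemma \ref{LE.KN}) to $W(z)=U(x+\tfrac{x_n}{2}z)-U(x)$ requires $\sup|W|$ on the small ball, and the hypothesis gives you no control on how $U$ varies in the $x'$-directions at a fixed height; the ``dyadic / telescoping'' sketch at the end does not resolve this because at every level of the telescope you still need a bound on a horizontal move, which again requires exactly the gradient bound you are trying to prove.

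The paper's proof avoids the difficulty by applying Lemma \ref{LE.KN} not to $U$ but to the harmonic function $\partial U/\partial x_n$, on the ball $B\bigl((x',t),t/2\bigr)$. The hypothesis gives a sup bound on $\partial U/\partial x_n$ there (since on that ball $y_n\ge t/2$), so the lemma yields a pointwise bound on the mixed second derivative, $\bigl|\partial^2 U/(\partial x_n\,\partial\ell)(x',t)\bigr|\le 4K_nC\,t^{\alpha-2}$ for every unit direction $\ell$. One then integrates this in the vertical variable from $x_n$ to $s$, and lets $s\to\infty$; the boundary term at infinity vanishes because $U$ is bounded, so by the Kresin--Maz'ya estimate (Proposition \ref{PROP.KHAVINSON}) the gradient of $U$ at height $s$ decays like $1/s$. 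This is the missing ingredient in your sketch: boundedness of $U$ is used to kill the gradient at infinity, which lets you reconstruct $\partial U/\partial\ell$ from a second-derivative estimate alone, with no horizontal comparison ever being needed. The $\frac{1}{1-\alpha}$ factor is not a dyadic geometric series but simply $\int_{x_n}^{\infty}t^{\alpha-2}\,dt=\frac{x_n^{\alpha-1}}{1-\alpha}$, which converges because $\alpha-2<-1$. I would suggest you rebuild the argument around these two steps: (i) Schwarz lemma applied to $\partial U/\partial x_n$, and (ii) vertical integration to $+\infty$ using decay of $\nabla U$.
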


\begin{proof}
Applying  Lemma \ref{LE.KN}  on the harmonic function  $ \frac {\partial U}{\partial x_n}$  on the open ball   $D=B((x',t),\frac t2)$ (where it is certainly bounded), and   having on mind  the
assumed estimate  of the $n$-th partial  derivative, we obtain
\begin{equation*}\begin{split}
\left|\frac {\partial^2} {\partial \ell \partial x_n} U (x', t) \right|
&\le\frac {K_n}{d( (x', t) ,\partial D)} \sup_{y = (y',y_n)\in D}\left|\frac{\partial}{\partial x_n}U(y)\right|
\\&\le \frac{2K_n} {t}   C y_n^{\alpha-1}= 2^{2-\alpha} K_n C t^{\alpha-2},
\end{split}\end{equation*}
because  for $y = (y',y_n)\in {B}((x',t),\frac {t}2)  $ we have $y_n\ge\frac { t}2$.

After permuting  the derivatives, we have
\begin{equation*}
\left|\frac {\partial^2} {\partial x_n\partial \ell}U (x',t)  \right| \le  4 K_n  C t^{\alpha-2},\quad (x',t)\in \mathbb {R}^n_+.
\end{equation*}

Let $x= (x',x_n) \in \mathbb{R}^n_+$. Integrating over  $[x_n, s]$,  we  obtain the estimate of the partial derivative  in arbitrary  direction $\ell$, $|\ell|=1$:
\begin{equation*}\begin{split}
\left|\frac {\partial} {\partial {\ell }} U (x',s)-\frac {\partial} {\partial {\ell }} U (x',x_n) \right|
&=\left|\int_{x_n}^{s}  \frac {\partial^2} {\partial x_n\partial \ell}U (x',t) dt \right|
\\&\le  4K_n  C\int_{x_n}^{s} t^{\alpha-2}dt
\\&=\frac {4 K_n  C}  {1-\alpha} ( x_n^{\alpha-1} - s^{\alpha-1} ).
\end{split}\end{equation*}
Since the gradient of $U$ at $(x',s)$ converges to zero as $s\to \infty$, which may be seen from Proposition \ref{PROP.KHAVINSON}, if we let $s\to \infty$ in the above inequality,    we obtain
\begin{equation*}
\left|\frac{\partial}{\partial \ell}U(x',x_n)\right|\le\frac {4 K_n  C}  {1-\alpha} x_n^{\alpha-1},\quad x = (x',x_n)\in \mathbb {R}^n_+.
\end{equation*}
Since the above estimate holds for every direction  $\ell$,  we derive the gradient estimate
\begin{equation*}
|\nabla U (x)| 
= \sup_{\ell\in\mathbb{R}^n,\, |\ell|=1}
\left|\frac{\partial}{\partial \ell}U(x)\right| \le \frac {4 K_n C}  {1-\alpha} x_n^{\alpha-1},\quad x = (x',x_n)\in \mathbb {R}^n_+,
\end{equation*}
from which follows  the  estimate we aimed  to prove, since $K_n\le (n+2)^{\frac 12}$.
\end{proof}

\begin{lemma}\label{LE.GM}
Assume that    $f\in C^1(\mathbb {R}^n_+ )$   satisfies
\begin{equation}\label{EQ.F.GRAD}
|\nabla f(z',z_n)|\le C z_n^{\alpha-1},\quad  z = (z',z_n) \in \mathbb{R}^n_+.
\end{equation}
Then  $f$  belongs to the Lipschitz  class $\Lambda^\alpha (\mathbb {R}^n_+)$, and we have
\begin{equation*}
|f(x) - f(y)| \le   \frac{4}\alpha  C |x-y|^{\alpha}, \quad x, y\in \mathbb{R}  ^n_+.
\end{equation*}
In particular $f$ has continuous extension on the closed upper half-space. 
\end{lemma}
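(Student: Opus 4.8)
The plan is to estimate $|f(x)-f(y)|$ by integrating $\nabla f$ along a short polygonal path joining $x=(x',x_n)$ and $y=(y',y_n)$ that stays in $\mathbb{R}^n_+$ and never descends much below the height $|x-y|$, so that the singular factor $z_n^{\alpha-1}$ in \eqref{EQ.F.GRAD} stays under control. Write $\delta=|x-y|$ (we may assume $\delta>0$) and, renaming the points if necessary, assume $x_n\le y_n$. Put $h=\max\{y_n,\delta\}$, so that $h\ge y_n\ge x_n$, and set $\tilde x=(x',h)$, $\tilde y=(y',h)$. I will run $f$ along the three segments $[x,\tilde x]$, $[\tilde x,\tilde y]$, $[\tilde y,y]$ (two vertical, one horizontal) and add the estimates.

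On the horizontal segment $[\tilde x,\tilde y]$ every point has last coordinate $h\ge\delta$, so by the fundamental theorem of calculus, \eqref{EQ.F.GRAD}, $|x'-y'|\le\delta$ and $\alpha-1<0$, one gets $|f(\tilde x)-f(\tilde y)|\le Ch^{\alpha-1}|x'-y'|\le C\delta^{\alpha-1}\delta=C\delta^\alpha$. On the vertical segment $[x,\tilde x]$, since $|\partial_n f(x',t)|\le|\nabla f(x',t)|\le Ct^{\alpha-1}$ and this is integrable near $0$ (as $\alpha-1>-1$), $|f(x)-f(\tilde x)|\le\int_{x_n}^{h}|\partial_n f(x',t)|\,dt\le\frac{C}{\alpha}(h^\alpha-x_n^\alpha)$. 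The key elementary point is $h^\alpha-x_n^\alpha\le(h-x_n)^\alpha\le\delta^\alpha$: the first inequality is subadditivity of $t\mapsto t^\alpha$ on $[0,\infty)$ applied to $x_n$ and $h-x_n\ge0$, and the second holds because $h-x_n\le\delta$ in both cases $h=y_n$ (then $h-x_n=y_n-x_n\le\delta$) and $h=\delta$ (then $h-x_n\le\delta$). Similarly $|f(\tilde y)-f(y)|\le\frac{C}{\alpha}(h^\alpha-y_n^\alpha)\le\frac{C}{\alpha}\delta^\alpha$, since $h^\alpha-y_n^\alpha$ is $0$ when $h=y_n$ and at most $\delta^\alpha$ when $h=\delta\ge y_n$. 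Adding the three bounds yields $|f(x)-f(y)|\le\bigl(\tfrac{2}{\alpha}+1\bigr)C\delta^\alpha\le\tfrac{4}{\alpha}C\delta^\alpha$, using $1\le 2/\alpha$; this is the asserted Hölder estimate, so $f\in\Lambda^\alpha(\mathbb{R}^n_+)$.

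The continuous extension to $\overline{\mathbb{R}^n_+}$ then comes for free: $f$ is uniformly continuous on $\mathbb{R}^n_+$, so for each $x'\in\mathbb{R}^{n-1}$ the values $f(x',x_n)$ form a Cauchy family as $x_n\to0^+$ (by the bound just proved), and we define $f(x',0)=\lim_{x_n\to0^+}f(x',x_n)$; the inequality $|f(x)-f(y)|\le\tfrac{4}{\alpha}C|x-y|^\alpha$ persists on the closure by continuity.

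I expect the only delicate point to be the choice of the intermediate height $h$: it must be large enough ($h\ge\delta$) to tame the horizontal cost $Ch^{\alpha-1}\delta$, yet close enough to $x_n$ and $y_n$ that the two vertical costs $\frac{C}{\alpha}(h^\alpha-x_n^\alpha)$ and $\frac{C}{\alpha}(h^\alpha-y_n^\alpha)$ do not exceed a constant multiple of $\delta^\alpha$; the choice $h=\max\{y_n,\delta\}$ together with the subadditivity of $t^\alpha$ reconciles these two requirements. Everything else is a routine application of the fundamental theorem of calculus along axis-parallel segments.
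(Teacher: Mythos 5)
Your argument is correct, but it takes a genuinely different route from the paper. The paper joins $x$ and $y$ by an arc $\gamma$ of a circle orthogonal to a sphere $\partial B(z,r)\subset\mathbb{R}^n_+$ and invokes the Gehring--Martio estimate $\int_\gamma d(z,\partial\mathbb{R}^n_+)^{\alpha-1}\,|dz|\le \frac{\pi}{\alpha 2^\alpha}|x-y|^\alpha$, which is precisely the statement that $\mathbb{R}^n_+$ is a $\mathrm{Lip}_\alpha$-extension domain; the constant $\frac{\pi}{\alpha 2^\alpha}$ is then bounded by $\frac{4}{\alpha}$. You instead use an axis-parallel polygonal path: lift $x$ and $y$ vertically to height $h=\max\{y_n,|x-y|\}$, cross horizontally, and descend. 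The horizontal leg is controlled because $h\ge|x-y|$ tames $z_n^{\alpha-1}$, while the vertical legs are controlled via the subadditivity $h^\alpha-x_n^\alpha\le(h-x_n)^\alpha$ of $t\mapsto t^\alpha$. Your computation gives the factor $\frac{2}{\alpha}+1\le\frac{4}{\alpha}$, so the stated constant is recovered. The trade-off: your proof is self-contained and completely elementary (no reference to the $\mathrm{Lip}_\alpha$-extension literature, no curved path, no arc-length integral estimate), whereas the paper's argument is shorter on the page because it delegates the geometric work to Gehring--Martio and simultaneously situates the lemma within the broader theory of extension domains. Both are valid; neither constant is sharp, and they are not uniformly comparable (yours is better for small $\alpha$, the paper's for $\alpha$ near $1$).
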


\begin{proof}
Let $x, y\in \mathbb{R}^n_+$. It is possible to find an open ball   $B(z,r)\subseteq \mathbb{R}^n_+$   which contains $x$ and $y$. Let $\gamma$ be a part of a circle which is orthogonal to the
sphere  $\partial B (z,r)$  with endpoints at $x$ and $y$.  As in Gehring and Martio work \cite[p. 205]{GEHRING.AASF}, one derives  the inequality
\begin{equation*}
\int_\gamma d(z,\partial \mathbb{R}^n_+) ^{\alpha-1} |dz|\le \frac{\pi}{\alpha 2^\alpha}|x-y|^\alpha
\end{equation*}
(which actually shows that the upper half-space is a  $\mathrm {Lip}_\alpha$-extension domain; for more of this type of domains we refer to  \cite{LAPP.AASF}).

Since the inequality \eqref{EQ.F.GRAD}  may be rewritten in the form
\begin{equation*}
|\nabla f (z',z_n)| \le  C  d(z,\partial \mathbb {R}^n_+) ^{\alpha-1},
\end{equation*}
it follows
\begin{equation*}\begin{split}
|f(x) - f(y)| & \le \int_\gamma |\nabla f (z)||dz| \le C  \int_\gamma d(z,\partial \mathbb{R}^n_+)^{\alpha-1}|dz|\le \frac{4}{\alpha} C |x-y|^{\alpha},
\end{split}\end{equation*}
which we aimed to prove.
\end{proof}

\begin{proof}[Proof of Theorem \ref{TH.MAIN.1}]
Let   $U$  be  a real-valued  bounded harmonic function on  $\mathbb{R}^n_+$, continuous on the closed upper half-space. The theorem assumes  that  $U(t ,0)$, $t\in\mathbb{R}^{n-1}$, satisfies
\begin{equation}\label{EQ.TH.MAIN.1}
| U(x', 0 ) - U (y',0) | \le C |x'-y'|^\alpha, \quad x',y'\in \mathbb {R}^{n-1},
\end{equation}
where $C$ is a constant.

It is enough to  prove the first estimate in this theorem concerning the $n$-th  partial derivative of $U$.   To obtain the gradient estimate, we have only to apply   Lemma \ref{LE.GRAD.ALPHA}.
Then the third  estimate follows from Lemma \ref{LE.GM}.

For  the theory of harmonic functions on  the  unit ball  or the upper half-space  in  $\mathbb{R}^n$ we refer to \cite{AXLER.BOOK} and  \cite{STEIN.BOOK}.  However, it is well known fact that
the function  $U (x',x_n)$   may be represented as the Poisson integral of the bounded  continuous  function  $U(t,0)$, $t\in \mathbb{R}^{n-1}$, i.e.,
\begin{equation*}
U(x',x_n) = \int_{\mathbb{R}^{n-1}} P ((x',x_n), t) U(t, 0) dm_{n-1}(t),\quad (x',x_n)\in \mathbb{R}^n_+.
\end{equation*}
where   $P((x',x_n), t)$   is the Poisson kernel for the upper half-space, and $m_{n-1}$ is the Lebesgue measure on $\mathbb {R}^{n-1}$. Recall that the expression for the   Poisson kernel for
the upper half-space  $\mathbb{R}^n_+$   is given by
\begin{equation*}\begin{split}
P((x',x_n), t) & =  \frac {\Gamma (\frac n2)}{\pi ^{\frac n2}} \frac{x_n}{ ( |x' - t|^2 + x_n^2) ^{\frac n2}},\quad x', t\in \mathbb{R}^{n-1},\, x_n\in (0,\infty).
\end{split}\end{equation*}

It is straightforward  to  calculate
\begin{equation*}\begin{split}
\frac {\partial}{\partial x_n} P((x',x_n),t)
= \frac {\Gamma (\frac n2)}{\pi ^{\frac n2}} \frac{|x'-t|^2 - (n-1)x_n^2}{ ( |x'-t|^2 + x_n^2) ^{\frac n2+1}}.
\end{split}\end{equation*}
Since for every  $(x',x_n)\in\mathbb{R}^n_+$ we have
\begin{equation*}
\int_{\mathbb{R}^{n-1}} \frac {\partial}{\partial x_n} P((x',x_n),t)dm_{n-1}(t) = 0
\end{equation*}
(which  may be checked  directly, or  by the  differentiation a constant function on  $\mathbb{R}^n_+$),  we obtain
\begin{equation*}\begin{split}
\frac {\partial}{\partial x_n}U (x',x_n)  & = \int_{\mathbb {R}^{n-1}}\frac {\partial} {\partial x_n} P((x', x_n),t) U (t, 0)dm_{n-1}(t)
\\& = \int_{\mathbb {R}^{n-1}}\frac {\partial} {\partial x_n} P((x',  x_n), t) ( U (t, 0) - U (x',0))dm_{n-1}(t).
\end{split}\end{equation*}
Introducing   $ u = x'- t $ it follows
\begin{equation*}\begin{split}
\left|\frac {\partial}{\partial x_n} U (x',x_n)\right| & 
\le \int_{\mathbb {R}^{n-1}} \left|\frac {\partial} {\partial x_n} P( (x', x_n),x' -u )\right| | U (x'-u, 0) - U (x',0)|dm_{n-1}(u).
\end{split}\end{equation*}
Having on mind that $U(t,0)$  satisfies \eqref{EQ.TH.MAIN.1},   we  obtain
\begin{equation*}\begin{split}
\left|\frac {\partial}{\partial x_n} U (x',x_n)\right| &
\le C \int_{\mathbb {R}^{n-1}} \left|\frac {\partial} {\partial x_n} P((x',x_n),x'- u )\right| | u |^{\alpha}dm_{n-1}(u).
\end{split}\end{equation*}

Our next  aim is to find  a  good  estimate for  the integral on the right side. First of all, we  have
\begin{equation*}\begin{split} 
\int_{\mathbb {R}^{n-1}} \left|\frac {\partial} {\partial x_n} P((x',x_n),x'- u )\right|& | u |^{\alpha}dm_{n-1}(u)
\\&\le  \frac {\Gamma (\frac n2)}{\pi ^{\frac n2}}\int_{\mathbb {R}^{n-1}} \frac{|u|^2 + (n-1) x_n^2}{ ( |u|^2 + x_n^2) ^{\frac n2+1}}|u|^{\alpha} dm_{n-1}(u)
\\& =  \frac {\Gamma (\frac n2)}{\pi ^{\frac n2}} x_n^{\alpha-1}  \int_{\mathbb {R}^{n-1}} 
\frac{|v |^2 + n-1}{ ( | v |^2 + 1) ^{\frac n2+1}} |v|^{\alpha} dm_{n-1}(v),
\end{split}\end{equation*}
where  $v = {x_n} ^{-1} u$. Let us denote by $J_n$ the last integral  expression along  with the multiplicative  factor.

Introducing the polar coordinates,  it is straightforward  to calculate the exact value of $J_n$ and to obtain  the estimate
\begin{equation*}
J _n \le 2\frac{ \Gamma (\frac 12 -\frac { \alpha}2) }{ \Gamma(\frac12) }
\frac{\Gamma(\frac{\alpha}{2}+ \frac  {   n-1}2 )}{  \Gamma( \frac {n-1}2) }x_n^{\alpha-1}.
\end{equation*}

In case  $n\ge 3$ we can  use  the  Gautschi   inequality for $x =\frac \alpha 2+ \frac { n- 3}2 $ and $s=1-\frac \alpha2$,                 and some elementary inequalities, in order to obtain
\begin{equation*}
\frac{\Gamma(\frac\alpha 2  + \frac{ n-1}2 ) }{  \Gamma( \frac {n-1}2) } 
= \frac{   \Gamma(x+1  ) }{ \Gamma( x + s) } \le(n-1)^{\frac12}.
\end{equation*}
This estimate along with $\Gamma\left(\frac 12 -\frac {\alpha}2\right)\le \frac {2}{1-\alpha}$,  and some trivial ones, lead to the final  estimate
\begin{equation*}
J_n\le \frac {4(n-1)^{\frac12}}{1-\alpha}x_n^{\alpha-1}.
\end{equation*}
(The last inequality is also valid in the case $n=2$, as may be checked immediately.)                        It follows the estimate for the $n$-th  partial derivative stated in   our  theorem.
\end{proof}

\begin{proof}[Proof  of Theorem  \ref{TH.MAIN.2}]
Let $U$ be a bounded real-valued harmonic function on $\mathbb{R}^n_+$, continuous on the closure of this domain, and let it satisfies
\begin{equation}\label{EQ.VERTICAL}
|U(x', x_n) - U (x', y_n)| \le   C   y_n^\alpha,\quad  x'\in \mathbb{R}^{n-1},\, 0< x_n < y_n,
\end{equation}
where $C$ is a constant.

For every    $\lambda \in (0,\frac 1\alpha)$   introduce   the following harmonic    function on the upper   half-space
\begin{equation*}
V_\lambda(x', x_n ) =  U (x', x_n + \lambda),\quad x  = (x',x_n) \in \mathbb {R}^n_+,
\end{equation*}
and attach to it   the number
\begin{equation}\label{EQ.ALAMBDA}
A_\lambda  = \sup _{  (x',x_n) \in \mathbb {R}^n_+  } x_n^{1 - \alpha}  \left| \frac {\partial} {\partial x_n} V_\lambda(x',x_n) \right|.
\end{equation}
We will firstly show that $A_\lambda$ is finite for every $\lambda\in(0,1)$. Then we will proceed to obtain an upper bound for $A_\lambda$, $\lambda>0$,   which  does  not depend on  $\lambda$.

Due to the assumption that $U$  is bounded, we may apply the  Kresin-Maz'ya estimate given in  Proposition \ref{PROP.KHAVINSON}. It follows
\begin{equation*}\begin{split}
x_n^{1 - \alpha}  \left| \frac {\partial} {\partial x_n}V_\lambda (x',x_n) \right|  & = {x_n^{1 - \alpha}}
\left| \frac {\partial} {\partial x_n} U(x',x_n+\lambda) \right| 
\\& \le \frac {x_n^{1 - \alpha}}{x_n+\lambda} M_n \sup _{y\in \mathbb{R}^n_+}|U(y)|
\\& \le \frac {M_n}{\lambda^\alpha} \sup _{y\in \mathbb{R}^n_+}|U(y)|
\end{split}\end{equation*}
for every $(x',x_n) \in \mathbb {R}^n_+$, since the maximum of the function  $x\to \frac{x^{1-\alpha}}{x+\lambda}$,  $x\in(0,\infty)$, achieves  at  $x = \frac 1\alpha  -  \lambda$.  From  the
last displayed  expression we  conclude that $A_\lambda$ is finite.

Applying    the Taylor expansion  on one variable function $s\to V_\lambda(x',s)$, where  $x=(x',x_n)\in\mathbb{R}^{n}$ is fixed, around  $x_n\in (0,\infty)$, we obtain that for $s\in (0,x_n)$
there exists  $t\in (s,x_n)$ such that
\begin{equation*}\begin{split}
V_\lambda (x',s) &  =  V_\lambda  (x', x_n) + \frac {\partial } {\partial x_n}V_\lambda (x',x_n)  (s- x_n) 
\\&+ \frac {\partial^2 } {\partial x_n^2} V_\lambda(x',t) \frac {(s-x_n)^2}2.
\end{split}\end{equation*}
In the above expansion let us  take   $s = (1 - \gamma)  x_n$, where  $\gamma \in (0,1)$  will be chosen letter. This equation takes the form
\begin{equation*}\begin{split}
V_\lambda (x', x_n - \gamma  x_n) & =  V_\lambda  (x', x_n) - \frac {\partial } {\partial x_n} V_\lambda (x',x_n) \gamma x_n
\\&+ \frac {\partial^2 } {\partial x_n^2}V_\lambda (x',t) \frac {\gamma^2 x_n^2}2.
\end{split}\end{equation*}
From the last  equality we have
\begin{equation*}\begin{split}
\frac {\partial } {\partial x_n}V_\lambda(x',x_n) \gamma x_n  & = V_\lambda  (x', x_n) - V_\lambda (x', x_n - \gamma  x_n)
\\&+ \frac {\partial^2 } {\partial x_n^2}V_\lambda (x',t) \frac {\gamma^2 x_n^2}2.
\end{split}\end{equation*}
Taking the absolute  values on both sides above,  then  using  the   triangle inequality,  we derive
\begin{equation*}\begin{split}
\left| \frac {\partial } {\partial x_n} V_\lambda (x',x_n)\right |\gamma x_n & \le | V_\lambda  (x', x_n) - V_\lambda (x', x_n - \gamma  x_n) |
\\&+ \left|\frac {\partial^2} {\partial x_n^2} V_\lambda(x',t) \right|  \frac { \gamma^2 x_n^2}2.
\end{split}\end{equation*}

Applying now the assumed  inequality  \eqref{EQ.VERTICAL} we  obtain
\begin{equation*}
\left| \frac {\partial} {\partial x_n} V_\lambda(x',x_n) \right|\gamma x_n \le C x_n ^{\alpha} +
\sup_{y\in {B}(x, \gamma x_n)} \left|\frac {\partial^2} {\partial x_n^2}V_\lambda (y) \right|\frac { \gamma^2 x_n^2}2,
\end{equation*}
since $t\in (x_n - \gamma x_n,x_n)$.

By  Lemma \ref{LE.KN}  we have the estimate
\begin{equation*}
\left|\frac {\partial^2} {\partial x_n^2}V_\lambda(y)\right|
\le \frac {K_n}{\frac {1-\gamma}2 x_n} \sup_{z\in {B}(x,\frac{1+\gamma}2 x_n)}
\left |\frac {\partial} {\partial x_n} V_\lambda(z)\right|,
\end{equation*}
where we have applied the lemma for the harmonic function $\frac {\partial V_\lambda}{\partial x_n}$ on the domain $D = {B}(x,\frac{1+\gamma}2 x_n)$; notice  that for $y\in {B}(x, \gamma x_n)$
we have  $d(y,\partial D)\ge \frac {1-\gamma}2 x_n$.

We have   arrived   to  the following inequality
\begin{equation*}
\left| \frac {\partial } {\partial x_n}V_\lambda (x',x_n) \right|\gamma x_n \le C x_n ^{\alpha} +
\frac {K_n}{\frac {1-\gamma}2 x_n} \sup_{z\in {B}(x,\frac{1+\gamma}2 x_n)}
\left |\frac {\partial} {\partial x_n}V_\lambda(z)\right|\frac { \gamma^2 x_n^2}2,
\end{equation*}
which after   dividing  by $\gamma x_n$  transforms to
\begin{equation*}
\left| \frac {\partial } {\partial x_n} V_\lambda (x',x_n)\right| \le \frac{C}\gamma x_n ^{\alpha-1}
+ \gamma \frac {K_n}{{1-\gamma}} \sup_{z\in {B}(x,\frac{1+\gamma}2 x_n)}\left |\frac {\partial} {\partial x_n} V_\lambda(z)\right|.
\end{equation*}

We will now estimate the second therm above via $A_\lambda$.  Note  that for $z = (z', z_n)\in {B}(x,\frac{1+\gamma}2 x_n)$  we have  $z_n \ge x_n-\frac{1+\gamma}2 x_n= \frac {1-\gamma}2 x_n$.
Having on mind the  relation  \eqref{EQ.ALAMBDA}, we obtain
\begin{equation*}
\left| \frac {\partial} {\partial x_n} V_\lambda(z',z_n) \right|\le A_\lambda z_n^{ \alpha-1}\le A_\lambda \left( \frac {1-\gamma}2x_n\right)^{ \alpha-1}.
\end{equation*}
Therefore,
\begin{equation*}
\sup_{z\in {B}(x,\frac{1+\gamma}2 x_n)}\left |\frac {\partial} {\partial x_n}V_\lambda(z)\right|\le A_\lambda \left( \frac {1-\gamma}2\right)^{-1}x_n^{ \alpha-1}.
\end{equation*}

Going back to the main inequality,  and dividing it by $x_n^{\alpha -1}$, we arrive to the following one
\begin{equation*}
x_n^{1-\alpha}\left| \frac {\partial  } {\partial x_n} V_\lambda (x', x_n)\right|
\le \frac{C}\gamma + \frac {\gamma}{1-\gamma} K_n A_\lambda \left(\frac {1-\gamma}2\right)^{- 1}.
\end{equation*}
If we  take supremum on the left side with  respect to  $x = (x', x_n)\in \mathbb{R}^n_+$,  we obtain
\begin{equation*}
A_\lambda\le \frac{C}\gamma + \frac {2\gamma K_n}{(1-\gamma)^2}  A_\lambda.
\end{equation*}

Let us take  for  $\gamma \in (0,1)$ the  number  such that
\begin{equation*}
\frac {2\gamma K_n}{(1-\gamma)^2}   = \frac 12.
\end{equation*}
It ie easy to check that the unique solution to this equation  in $(0,1)$ is given by
\begin{equation*}
\gamma =  K_n((1+ {K_n}^{-1} )^{\frac 12}-1)^2.
\end{equation*}

From the last inequality concerning $A_\lambda$ it follows
\begin{equation*}
A_\lambda \le \frac{2 C}{\gamma} \Leftrightarrow A_\lambda \le \frac {2 C}{  K_n((1+ {K_n}^{-1} )^{\frac 12}-1)^2}.
\end{equation*}
Using the elementary inequality
\begin{equation*}
\frac{2}{x((1+ x^{-1})^{\frac 12} -1)^2}\le 13 x,\quad  x>1,
\end{equation*}
and then  $K_n\le (n+2)^{\frac 12}$,  we obtain
\begin{equation*}
A_\lambda\le 13 (n+2)^{\frac 12} C,
\end{equation*}
where the right side does not depend on $\lambda$.

We have just  proved the inequality
\begin{equation*}
\left| \frac {\partial} {\partial x_n} U(x',x_n+\lambda) \right|  \le 13 (n+2)^{\frac 12} C x_n^{ \alpha-1},\quad x=(x',x_n)\in \mathbb{R}^n_+
\end{equation*}
for every $\lambda \in (0,1)$.  If we finally left $\lambda\to 0$ above, we obtain  the estimate  of the  $n$-th  partial derivative for  $U$  which we aimed  to prove.

It remains to apply Lemma \ref{LE.GRAD.ALPHA}  and  Lemma \ref{LE.GM}  in order to obtain the other  two  estimates in the theorem.
\end{proof}

\begin{proof}[Proof of Theorem \ref{TH.MAIN}]
Let us introduce one more semi-norm
\begin{equation*}
\|U\|_4 = \sup_{x'\in \mathbb{R}^{n-1},\,  0<x_n<y_n} \frac { |  U (x',x_n) - U  (x',y_n) |  } {y_n^\alpha}.
\end{equation*}

It is enough to show this theorem in the case of real-valued bounded harmonic functions. Let $U$ be a such one. We have  $\| U\|_2 \le \|U\|_1$,  and by Theorem \ref{TH.MAIN.1} there  exists a
constant  $C_2$ such that $\| U\|_1 \le C_2 \|U\|_2$. Applying Theorem  \ref{TH.MAIN.2} we obtain a constant $C_4$ such that  $\|U\|_1\le C_4 \|U\|_4$. Since   $\|U\|_4 \le \|U\|_3\le \|U\|_1$,
it follows the statement of  this  theorem.
\end{proof}

\begin{proof}[Proof of Theorem \ref{TH.MAIN.4}]
This theorem is  a consequence of Theorem \ref{TH.MAIN.1}, Theorem  \ref{TH.MAIN.2},  and some elementary consideration.  We follow the approach  as in
\cite{DYAKONOV.MZ.2005, DYAKONOV.MZ.2006, PAVLOVIC.RMI}.

It enough to prove that the condition (H2) implies (H1), and (H3) implies (H1).

Note that the condition (H2) implies the following one
\begin{equation*}
| U  (x',0) -  U  (y',0)| \le 2 C_2  |x'- y'|^\alpha,\quad  x ',y' \in \mathbb {R}^{n-1}.
\end{equation*}
Indeed, if $U (x',0) U (y',0)\ge 0$, then we have
\begin{equation*}
|U(x', 0) - U (y', 0)| = | |U(x', 0) | - |U (y', 0)|| \le C_2 |x' - y'|^\alpha.
\end{equation*}
On the other hand, if $U (x', 0) U (y', 0)< 0$,   because the  function $U(x',0)$ is continuous on $\mathbb {R}^{n-1}$, one can find $z'\in [x', y']$ ($[x',y']\subseteq \mathbb {R}^{n-1}$ is a
segment with endpoints at  $x'$ and $y'$) such that  $U (z', 0) = 0$. Having on mind the preceding case, we obtain
\begin{equation*}\begin{split}
|U(x',0) - U (y',0)| & \le  |U(x',0) - U (z',0)|   +  |U (z',0)  - U (y',0)|\\&\le  C_2 |x'- z' |^\alpha + C_2 |z '-y'|^\alpha\le 2 C_2  |x'- y'|^\alpha.
\end{split}\end{equation*}
Therefore, $U(x', ,0)\in \Lambda^\alpha (\mathbb {R}^{n-1})$, so it  remains to apply Theorem \ref{TH.MAIN.1}.

Assume now that the condition (H3) holds. We will derive the following inequality
\begin{equation*}
|U(x', x_n) - U (x', y_n)| \le 4C_3 y_n^\alpha,\quad  x' \in \mathbb {R}^{n-1}, 0< x_n<y_n.
\end{equation*}
As before, we  will separate the proof in  two cases.

If $U(x', x_n)  U (x', y_n) \ge  0$, then we have
\begin{equation*}\begin{split}
|U(x', x_n ) - U (x', y_n)|     &   = | |U(x', x_n) | - | U (x', y_n) ||
\\&  \le| |U(x', x_ n) | - | U (x',0) || +| |U(x',0) | - | U (x',y_n) ||
\\&\le C_3  x_n^\alpha +  C_3 y_n^\alpha\le 2 C_3 y_n^\alpha.
\end{split}\end{equation*}

If $U(x',x_n) U (x',y_n) < 0$, then because  of the continuity of $U(x', t)$, $t\ge 0$,  there exists $z_n\in (x_n,y_n)$ such that $U(x', z_n)= 0$.         Applying  the first  case, we obtain
\begin{equation*}\begin{split}
|U(x', x_n) - U (x', y_n)| & \le    | U(x', x_n)  -  U (x', z_n) | + | U(x',z_n)  -   U (x', y_n)|
\le 4C_3 y^\alpha_n.
\end{split}\end{equation*}

By   Theorem \ref{TH.MAIN.2}  we  conclude that   (H3)  implies  (H1).
\end{proof}

\end{document}